\documentclass{article}

\usepackage[margin=1in,a4paper]{geometry}
\usepackage{amssymb,amsmath,,wasysym}
\usepackage{amsthm} 
\usepackage{xcolor}
\usepackage{float}
\usepackage[colorlinks,linkcolor=blue,citecolor=blue,pagebackref,hypertexnames=false, breaklinks]{hyperref}
\usepackage{circuitikz}

\theoremstyle{plain}
\newtheorem{thm}{Theorem}[section]
\newtheorem{lem}[thm]{Lemma}
\newtheorem{cor}[thm]{Corollary}

\newtheorem{conj}[thm]{Conjecture}
\newtheorem{example}[thm]{Example}

\newenvironment{Example}{\begin{example}\rm}{\end{example}}

\theoremstyle{definition}
\newtheorem{defn}[thm]{Definition}

\theoremstyle{remark}
\newtheorem{remark}[thm]{Remark}
\newcommand{\bremark}{\begin{remark} \em}
\newcommand{\eremark}{\end{remark} }
\usepackage{color}

\hbadness=\maxdimen






\usepackage[textwidth=3.2cm]{todonotes}

\begin{document}


\title{Oscillations of BV measures on nested fractals}

\author{Patricia Alonso Ruiz\footnote{Partly supported by the NSF grant DMS 1951577}, Fabrice Baudoin\footnote{Partly supported by the NSF grant DMS~1901315.}}

\maketitle

\begin{abstract}
Motivated by recent developments in the theory of bounded variation functions on unbounded nested fractals, 
this paper studies the exact asymptotics of functionals related to the total variation measure associated with unions of $n$-cells. The oscillatory behavior observed implies the non-uniqueness of BV measures in this setting.

\vspace*{2em}
\noindent
\textbf{Keywords:} BV measures; heat semigroup; geometric functionals, fractals.\\
\textbf{MSC2010 classification:}  MSC 26A45; MSC 31E05; MSC 31C25; MSC 28A80.
\end{abstract}

\tableofcontents
 
\section{Introduction}
Functions of bounded variation (BV) and their bounded variation measures are tightly connected to the geometry of the underlying space. Already in the 1920s, Caccioppoli characterized the perimeter measure of Euclidean sets as the BV measure associated with the corresponding indicator functions, c.f.~\cite{Cac52}. He observed that the perimeter of any measurable $E\subseteq\mathbb{R}^d$ coincides with the total variation norm of $\mathbf{1}_E$, i.e.
\begin{equation}\label{E:def_perim_Eucl}
\|D\mathbf{1}_E\|(\mathbb{R}^d):=\sup\Big\{\int_E{\rm div}\phi\,dx\colon \phi\in C_c(\mathbb{R}^d),\|\phi\|_\infty\leq 1\Big\}={\rm Perimeter}\,(E).
\end{equation}
This observation led to a general definition of sets with finite perimeter as being those for which the left hand side of~\eqref{E:def_perim_Eucl} is finite. Sets of finite perimeter are thus also referred to as Caccioppoli sets.

\medskip

The above characterization provided a natural way to extend the concept of (finite) perimeter beyond the Euclidean setting that would consist in finding a suitable analogue of ${\rm div}\,\phi$. An especially successful approach was found in the concept of weak upper gradients, developed in seminal works by Koskela~\cite{Kos00} and Shanmugalingam~\cite{Sha00}. This theory has led to many developments in the understanding of the interconnection between analytic and geometric properties of a variety of metric measure spaces admitting enough rectifiable curves. We refer the reader to~\cite{nagesbook} and references therein. 

\medskip

Another successful approach to characterize the BV measure~\eqref{E:def_perim_Eucl} in the context of metric measure spaces, that bypasses the above mentioned rectifiability requirements, relies on early work due to Korevaar-Schoen~\cite{KS97}. In this case, $\|D\mathbf{1}_E\|(\mathbb{R}^d)$ is comparable to  
\begin{equation}\label{E:BV_KS_Eucl}
\liminf_{r\to 0^+}\frac{1}{\sqrt{r}}\int_{\mathbb{R}^d}\int_{B(x,r)}\frac{|\mathbf{1}_E(x)-\mathbf{1}_E(y)|}{r^{d}}dy\,dx.
\end{equation}
The latter expression, with the same scaling $\sqrt{r}$, extends to Riemannian manifolds and more generally to Dirichlet spaces with Gaussian heat kernel estimates, see~\cite{ARB21b,BV2,MMS16}. When the heat kernel satisfies sub-Gaussian estimates, as the nested fractals $(X,d,\mu)$ considered in the present paper do, general BV functions and their total variation measures were introduced in~\cite[Section 4]{BV3}. These measures are comparable to
\begin{equation}\label{E:BV_KS_fractal} 
\nu_f(X):=\liminf_{r\to 0^+}\frac{1}{r^{\alpha_1 d_w}}\int_{X}\int_{B(x,r)}\frac{|f(x)-f(y)|}{r^{d_h}}d\mu(y)\,d\mu(x),
\end{equation}
where $f\in L^1(X,\mu)$, $d_h$ is the Hausdorff dimension and $d_w$ the so-called walk dimension of the space. $\alpha_1$ is a suitable critical exponent that guarantees a finite non-trivial $\liminf$ in~\eqref{E:BV_KS_fractal} for sufficiently many functions. In the case of Gaussian heat kernel estimates, this exponent is known to be $\alpha_1=1/2$, c.f.~\cite[Section 4.2]{BV2}, while for unbounded nested fractals $\alpha_1=d_h/d_w$, see Section~\ref{S:KS_BV} and~\cite[Section 4]{BV3} for details. Intuitively and loosely speaking, the parameter $d_h-\alpha_1 d_w$ can be interpreted as a minimal dimension of the measure theoretical boundary of open sets, see ~\cite[Section 2.4]{BV3}.
One advantage of~\eqref{E:BV_KS_fractal} is that it allows to perform rather explicit computations in specific examples, a feature that will be key in proving the non-uniqueness of BV measures in Section~\ref{SS:KS_non-limit} when $f=\mathbf{1}_E$ and $E\subseteq X$ is expressible as a finite union of cells. 
To that end, the BV measure~\eqref{E:BV_KS_fractal} will be 
expressed in terms of the functional
\begin{equation}\label{E:def_KS_functional}
\widetilde{\mathcal{M}}_f(r):=\frac{1}{r^{d_h}}\int_{X}\int_{B(x,r)}|f(x)-f(y)|\,d\mu(y)\,d\mu(x).
\end{equation}


A third approach to characterize the perimeter measure in the context of nested fractals and other Dirichlet spaces with sub-Gaussian heat kernel estimates makes use of the intrinsic diffusion process associated with the underlying space. Defining the functional
\begin{equation}\label{E:def_HS_functional}
\mathcal{M}_f (t) :=\int_{X} \int_{X} |f (x) -f(y)| p_t(x,y) \, d\mu (x) d\mu(y)
\end{equation}
for any $f\in L^1(X,\mu)$, it was proved in~\cite[Theorem 4.2]{BV3} that BV functions are characterized as those integrable for which $\sup_{r>0}r^{-\alpha_1d_w}\widetilde{\mathcal{M}}_f(r)$ is finite, and for these it holds that
\begin{equation}\label{E:BV3_main_thm}
\limsup_{t\to 0^+}\frac{1}{t^{d_h/d_w}}\mathcal{M}_f(t)\apprle \limsup_{r\to 0^+}\frac{1}{r^{d_h}}\widetilde{\mathcal{M}}_f(r)\apprle\liminf_{r\to 0^+}\frac{1}{r^{d_h}}\widetilde{\mathcal{M}}_f(r)\apprle \liminf_{t\to 0^+}\frac{1}{t^{d_h/d_w}}\mathcal{M}_f(t).
\end{equation}
The first and third inequality above follow from~\cite[Lemma 4.13]{BV3} and the second from~\cite[Theorem 4.9]{BV3}.
The heat semigroup based characterization of BV functions combines ideas going back to de Giorgi~\cite{deG54} and Ledoux~\cite{Led94}, which had been used to prove the analogue result in the Riemannian manifold setting with $\alpha_1=1/2$, see~\cite{MPPP07}. In fact, the exact characterization of the BV measures, or total variation of functions, on a Riemannian manifold $\mathbb{M}$ through the heat semigroup has only been proved recently in~\cite{ARB21b}. Namely, for any BV function $f$, both $\limsup$ and $\liminf$ in~\eqref{E:BV3_main_thm} coincide and moreover
\begin{equation}\label{E:RiemBV_main_thm}
\|Df\|(\mathbb{M})=\lim_{t\to 0^+}\frac{\sqrt{\pi}}{2\sqrt{t}}\mathcal{M}_f(t).
\end{equation}
The present paper shows that nested fractals behave quite differently. In particular, we find unbounded nested fractals that present an oscillatory behavior of the BV measure of certain indicator functions.
This fact, recorded in Theorem~\ref{T:liminf_Ucells_intro}, is therefore a refinement of the estimates~\eqref{E:BV3_main_thm} when $f$ is the indicator function of a finite union of cells like those illustrated in Figure~\ref{F:U_cells_example_intro}. Such functions were proved to be BV in~\cite[Theorem 5.1]{BV3}. For such a union of $n$-cells $U=\bigcup_{i=1}^NK^{(i)}$, one defines its boundary $\partial U$ to be the set of all vertex points that intersect its complement.

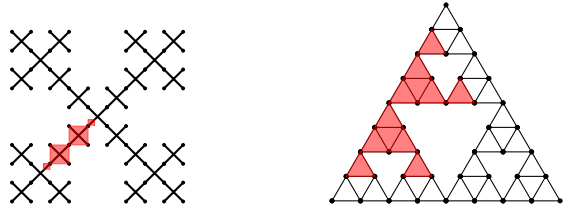
\begin{figure}[H]
\centering
\begin{tabular}{ccc}
\begin{tikzpicture}[scale=1/4]
\foreach \c/\d in {0/0, 0/6, 3/3, 6/0, 6/6}{
\foreach \a/\b in {0/0, 0/2, 1/1, 2/0, 2/2}{
\fill[shift={(\c,\d)}] (\a,\b) circle (3pt);
\fill[shift={(\c,\d)}] (\a,\b+1) circle (3pt);
\fill[shift={(\c,\d)}] (\a+1,\b+1) circle (3pt);
\fill[shift={(\c,\d)}] (\a+1,\b) circle (3pt);
\draw[thick,shift={(\c,\d)}] (\a,\b) -- (\a+1,\b+1) (\a,\b+1) -- (\a+1,\b);
}
}
\filldraw[red,semitransparent] (2,2) --++ ($(90:1)$) --++ ($(0:1)$) --++ ($(90:1)$) --++ ($(0:1)$) --++ ($(270:1)$) --++ ($(180:1)$) --++ ($(270:1)$) --++ ($(180:1)$);
\filldraw[red,semitransparent] (5/3,5/3) --++ ($(90:1/3)$) --++ ($(0:1/3)$) --++ ($(270:1/3)$);
\filldraw[red,semitransparent] (4,4) --++ ($(90:1/3)$) --++ ($(0:1/3)$) --++ ($(270:1/3)$);
\end{tikzpicture}
&\hspace*{3em}&
\begin{tikzpicture}[scale=1.5]
\tikzstyle{every node}=[draw,circle,fill=black,minimum size=1.5pt, inner sep=0pt]
\draw ($(0:0)$) node () {} --++ ($(0:2/8)$) node () {} --++ ($(120:2/8)$) node () {} --++ ($(240:2/8)$) node () {};
\draw ($(0:2/8)$) node () {} --++ ($(0:2/8)$) node () {} --++ ($(120:2/8)$) node () {} --++ ($(240:2/8)$) node () {};
\draw ($(60:2/8)$) node () {} --++ ($(0:2/8)$) node () {} --++ ($(120:2/8)$) node () {} --++ ($(240:2/8)$) node () {};
\foreach \a in {0,60}{
\draw ($(\a:2/4)$) node () {} --++ ($(0:2/8)$) node () {} --++ ($(120:2/8)$) node () {} --++ ($(240:2/8)$) node () {};
\foreach \b in{0,60}{
\draw ($(\a:2/4)+(\b:2/8)$)node () {} --++ ($(0:2/8)$) node () {} --++ ($(120:2/8)$) node () {} --++ ($(240:2/8)$) node () {};
}
}
\foreach \c in{0,60}{
\draw ($(\c:2/2)$) node () {} --++ ($(0:2/8)$) node () {} --++ ($(120:2/8)$) node () {} --++ ($(240:2/8)$) node () {};
\draw ($(\c:2/2)+(0:2/8)$) node () {} --++ ($(0:2/8)$) node () {} --++ ($(120:2/8)$) node () {} --++ ($(240:2/8)$) node () {};
\draw ($(\c:2/2)+(60:2/8)$) node () {} --++ ($(0:2/8)$) node () {} --++ ($(120:2/8)$) node () {} --++ ($(240:2/8)$) node () {};
\foreach \a in {0,60}{
\draw ($(\c:2/2)+(\a:2/4)$) node () {} --++ ($(0:2/8)$) node () {} --++ ($(120:2/8)$) node () {} --++ ($(240:2/8)$) node () {};
\foreach \b in{0,60}{
\draw ($(\c:2/2)+(\a:2/4)+(\b:2/8)$)node () {} --++ ($(0:2/8)$) node () {} --++ ($(120:2/8)$) node () {} --++ ($(240:2/8)$) node () {};
}
}
}
\filldraw[red,semitransparent] ($(0:0)+(60:1/4)$) --++ ($(60:6/4)$) --++ ($(300:1/4)$) --++ ($(180:1/4)$) --++ ($(300:1/2)$) --++ ($(60:1/4)$) --++ ($(300:1/4)$) --++ ($(180:3/4)$) --++ ($(300:3/4)$) --++ ($(180:1/4)$) --++ ($(60:1/4)$) --++ ($(180:1/2)$) --++ ($(300:1/4)$) --++ ($(180:1/4)$);
\end{tikzpicture}
\end{tabular}
\caption{Unions of cells in the Vicsek set (left) and in the Sierpinski gasket (right)}
\label{F:U_cells_example_intro}
\end{figure}

\begin{thm}\label{T:liminf_Ucells_intro}
Let $(X,d,\mu)$ be an unbounded nested fractals as described in Section~\ref{S:setup}, with length scaling factor $L$, Hausdorff dimension $d_h$ and walk dimension $d_w$. There exist positive and bounded periodic functions $\Phi$ and $\Psi$ with period $L^{-d_w}$, respectively $L^{-1}$, such that for any finite union of cells $U\subset X$,
\begin{equation}\label{E:liminf_Ucells_intro}
\lim_{t\to 0^+}\Phi(-\ln t)\frac{1}{t^{d_h/d_w}}\mathcal{M}_{\mathbf{1}_U}(t)=\lim_{r\to 0^+}\Psi(\ln r)\frac{1}{r^{d_h}}\widetilde{\mathcal{M}}_{\mathbf{1}_U}(r)=|\partial U|.
\end{equation}
Here, $|\partial U|$ equals the number of points in the boundary of $U$. In the case of the unbounded Vicsek set or Sierpinski gasket, the function $\Psi$ is non-constant.
\end{thm}

While this oscillatory behavior was expected in view of the on-diagonal oscillations of the heat kernel at small scales in these settings, see e.g.~\cite{Ham11,Kaj13}, the non-uniqueness of the BV measure is less straightforward to obtain. In fact, we can presently prove the function $\Psi$ in~\eqref{E:liminf_Ucells_intro} to be non-constant, see Section~\ref{SS:KS_non-limit}, thus settling a question raised in~\cite[Remark 4.23]{BV3}. 
However, the methods currently available fail to show that property for the function $\Phi$, which is also still open in the case of other heat-kernel related functionals~\cite{Ham11}. Besides, the nature of the functional $\mathcal{M}_f(t)$ seems to make the techniques that successfully led Kajino to prove on-diagonal oscillations in~\cite{Kaj13} not applicable in this case. The question remains the subject of future investigations.

\begin{conj}\label{conj1}
The periodic function $\Phi$ in~\eqref{E:liminf_Ucells_intro} is non-constant.
\end{conj}

{Proving Conjecture \ref{conj1} would illustrate further the stark contrast between the $L^1$ theory and the $L^2$ theory on fractals since for any function $f$ in the domain of the Dirichlet form $\mathcal{E}$ associated with  the heat kernel $p_t(x,y)$ one has the following exact limit
\[
\lim_{t \to 0^+} \frac{1}{2t} \int_{X} \int_{X} |f (x) -f(y)|^2 p_t(x,y) \, d\mu (x) d\mu(y)=\mathcal{E}(f,f).
\]}
  The paper is organized as follows: Section 2 describes the unbounded nested fractals considered as underlying spaces. Section~\ref{S:HS_functional} deals with the part of the proof of Theorem~\ref{T:liminf_Ucells_intro} concerning the heat kernel functional $\mathcal{M}_{\mathbf{1}_U}$, while Section 4 provides the proof corresponding to the Korevaar-Schoen functional $\widetilde{\mathcal{M}}_{\mathbf{1}_U}$. This last section also contains specific examples and proves the fact that $\Psi$ is non-constant, which in particular implies the non-uniqueness of the BV measures.
 
\section{Notations and set up}\label{S:setup}

\subsection{Compact nested fractals}

To set the framework and notation throughout the paper, this section briefly recalls the construction of planar simple nested fractals as introduced by Lindstr\o m in~\cite{Lin90}, see also~\cite{Kum93}. For $L>1$, an $L$-similitude is a map $\psi:\mathbb R^d\to \mathbb R^d$ such that 
\[
\psi(x)=L^{-1}A(x)+b,
\] 
where $A$ is a unitary linear map and $b\in\mathbb R^d$. The factor $L^{-1}$ is called the contraction ratio of $\psi$. 
Given a collection of $L$-similitudes $\{\psi_i\}_{i=1}^M$ in $\mathbb R^d$, there exists a unique nonempty compact set $K\subset\mathbb R^d$ such that 
\[
K=\bigcup_{i=1}^M \psi_i(K)=:\Psi(K).
\]
Each map $\psi_i$ has a unique fixed point $q_i$ and we denote by $V:=\{q_i\}_{i=1}^M$ the set of all fixed points. A fixed point $x\in V$ is called an essential fixed point if there exist $y\in V$ and $i\ne j$ such that $\psi_i(x)=\psi_j(y)$; we denote by $V^{(0)}$ the set of all essential fixed points. 
For any $n\in \mathbb N$, we further write $V^{(n)}:=\Psi^{n}(V^{(0)})$ and 
\[
V^{(\infty)}:=\bigcup_{n\in\mathbb{N}}V^{(n)}.
\]
Finally, we define the word spaces $W_n:=\{1,2,\cdots, M\}^n$,  for each $n\geq 1$, and $W_{\infty}:=\{1,2,\cdots, M\}^{\mathbb N}$. Each finite word $w=(i_1, \cdots, i_n)\in W_n$ addresses the map $\psi_w:=\psi_{i_1}\circ \cdots \circ \psi_{i_n}$ and the set $A_w:=\psi_w(A)$ for any $A\subseteq K$. Deviating from the original terminology in~\cite{Lin90} to the currently more established one, $K_w$ will be called an $n$-cell with set of vertices $V_w^{(0)}$. 

\begin{defn}\label{def:nested}
Let $(K, \psi_1, \cdots, \psi_M)$ be as described above. The set $K$ is called a nested fractal if the following conditions are satisfied:
\begin{enumerate}
\item $|V^{(0)}|\ge 2$;

\item (Connectivity) For any $i,j\in W_1$, there exists a sequence of 1-cells $V_{i_0}^{(0)},\cdots, V_{i_k}^{(0)}$ such that $i_0=i$, $i_k=j$ and $V_{i_{r-1}}^{(0)} \cap V_{i_r}^{(0)} \ne 0$, for $1\le r\le k$;

\item (Symmetry) For any $x,y\in V^{(0)}$, the reflection in the hyperplane $H_{xy}=\{z\in\mathbb{R}^d:|x-z|=|y-z|\}$ maps $n$-cells to $n$-cells;

\item (Nesting) For any $w,v\in W_n$ and $w\ne v$, $K_w\cap K_v= V_w^{(0)} \cap V_v^{(0)}$;

\item (Open set condition) There exists a non-empty bounded open set $U$ such that $\psi_i(U)$, $1\le i\le M$, are disjoint and $\Psi(U)\subset U$.
\end{enumerate}
\end{defn}

In addition, we will require that any two $n$-cells intersect at most at one point, i.e.
\begin{equation*}
|V^{(0)}_v\cap V^{(0)}_w|\in\{0,1\}
\end{equation*}
for any $v,w\in W_{n}$ and $n\in \mathbb{N}$. This property may possibly follow from the conditions imposed for nested fractals and it is still an open question whether that is actually the case, see~\cite[Remark 5.25]{Bar95}.

\medskip

The parameters $L$ and $M$ are called the \emph{length scaling factor}, respectively the \emph{mass scaling factor} of $K$. In particular, with respect to the Euclidean distance $d(x,y)$, 
\[
d(\psi_w(x),\psi_w(y))=L^{-n}d(x,y),
\]
for any $x,y\in K$ and $w\in W_n$, whereas 
the normalized Hausdorff measure $\mu$ on $K$ satisfies
\[
\mu(K_w)=\mu(\psi_w(K))=M^{-n}
\]
for all $w\in W_n$. The \emph{Hausdorff dimension} of $K$ is thus given by
\[
d_h=\frac{\log M}{\log L}.
\]
The two main examples in the present paper are the Vicsek set and the Sierpinski gasket illustrated in Figure~\ref{F:VS_SG}.

\begin{figure}[H]
\centering
\begin{tabular}{ccc}
\begin{tikzpicture}[scale=1/4]
\foreach \c/\d in {0/0, 0/6, 3/3, 6/0, 6/6}{
\foreach \a/\b in {0/0, 0/2, 1/1, 2/0, 2/2}{
\fill[shift={(\c,\d)}] (\a,\b) circle (3pt);
\fill[shift={(\c,\d)}] (\a,\b+1) circle (3pt);
\fill[shift={(\c,\d)}] (\a+1,\b+1) circle (3pt);
\fill[shift={(\c,\d)}] (\a+1,\b) circle (3pt);
\draw[thick,shift={(\c,\d)}] (\a,\b) -- (\a+1,\b+1) (\a,\b+1) -- (\a+1,\b);
}
}
\end{tikzpicture}
&\hspace*{3em}&
\begin{tikzpicture}[scale=1/20]
\foreach \c in {0,1,2} {
\foreach \b in {0,1,2} {
\foreach \a in {0,1,2} {
\foreach \e in {0,1,2} {
\foreach \f in {0,1,2} {
\draw[] ($(90+120*\f:16)+(90+120*\e:8) + (90+120*\c:4)+(90+120*\b:2)+(90+120*\a:1)+(90:1)$) -- ($(90+120*\f:16)+(90+120*\e:8) + (90+120*\c:4)+(90+120*\b:2)+(90+120*\a:1)+(210:1)$) -- ($(90+120*\f:16)+(90+120*\e:8) + (90+120*\c:4)+(90+120*\b:2)+(90+120*\a:1)+(330: 1)$)--cycle; 
}
}
}
}
}
\end{tikzpicture}
\end{tabular}
\caption{The Vicsek set (left) and the Sierpinski gasket (right)}
\label{F:VS_SG}
\end{figure}
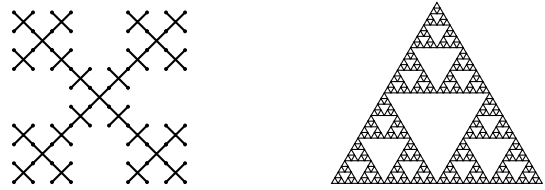

\begin{Example}[Sierpinski gasket]
\

\noindent 
Let $q_1=0, q_2=1, q_3=e^{\frac{i\pi}3}$ be three points in $\mathbb R^2=\mathbb C$ regarded as the vertices of an equilateral triangle of side length one. Further, define $\psi_i(z):=\frac12(z-q_i)+q_i$ for $i=1,2,3$. The Sierpi\'nski gasket $K_{\rm SG}$ is the unique non-empty compact set such that 
\[
K_{\rm SG}=\bigcup_{i=1}^3 \psi_i(K_{\rm SG}).
\]
Its associated standard measure $\mu$ is a normalized Hausdorff measure that satisfies
\[
\mu(\psi_{i_1} \circ \cdots \circ \psi_{i_n} (K_{SG}))=3^{-n}
\]
for $i_1, \cdots, i_n \in \{1,2,3\}$. 
Thus, $K_{SG}$ is a nested fractal with scaling factor $L_{\rm SG}=2$, and mass scaling factor $M_{\rm SG}=3$. In particular, $d_h=\frac{\log 3}{\log 2}$ and $d_w=\frac{\log 5}{\log 2}$.
\end{Example}

\begin{Example}[Vicsek set] \label{VS}
\

\noindent Let $q_1=(0,0)$, $q_2=(0,1)$, $q_3=(1,0)$ and $q_4=(1,1)$ be the corners of a unit square and let $q_5=(1/2,1/2)$. For each $1\le i\le 5$ define the map  $\psi_i(z):=\frac13(z-q_i)+q_i$. The Vicsek set $K_{\rm VS}$ is the unique non-empty compact set such that 
\[
K_{\rm VS}=\bigcup_{i=1}^5 \psi_i(K_{\rm VS}).
\]
Its associated standard measure $\mu$ is a normalized Hausdorff measure that satisfies
\[
\mu(\psi_{i_1} \circ \cdots \circ \psi_{i_n} (K_{\rm VS}))=5^{-n}
\]
for each $i_1, \cdots, i_n \in \{1,2,3, 4,5\}$. Thus, the Vicsek set $K_{\rm VS}$ is a nested fractal with scaling factor $L_{\rm VS}=3$ and mass scaling factor $M_{\rm VS}=5$. In particular, $d_h=\frac{\log 5}{\log 3}$ and $d_w=\frac{\log 15}{\log 3}$.
\end{Example}

\subsection{Unbounded nested fractals}

An unbounded nested fractal arises as a blow-up of a compact nested fractal. We refer to~\cite{Kum93, Bar13, KOP19} and also~\cite{Str98} for different constructions.
Without loss of generality, we will assume from now on that $\psi_1=L^{-1} x$ and consider the unbounded nested fractal $K^{\langle\infty\rangle}$ defined as
\[
K^{\langle\infty\rangle}:=\bigcup_{n=1}^{\infty} K^{\langle n\rangle},
\]
where $K^{\langle n\rangle}=L^n K$. Its associated set of essential fixed points is defined analogously as $V_0^{\langle \infty\rangle}=\bigcup_{n=0}^{\infty}V_n$, where $V_n=L^n V^{(0)}$, and $V_n^{\langle\infty\rangle}=L^{n}V^{(n)}$ 
The associated standard Hausdorff measure, denoted by $\mu^{\langle\infty\rangle}$, satisfies $\mu^{\langle\infty\rangle}(K^{\langle n\rangle})=M^n$ and it is $d_h$-Ahlfors regular, i.e. 
\begin{equation}\label{eq:volume}
cr^{d_h} \le \mu^{\langle\infty\rangle}(B(x,r)) \le  Cr^{d_h}
\end{equation}
for any $x \in K^{\langle\infty\rangle}$ and $r \ge 0$.

\subsection{Brownian motion and heat kernels}
Brownian motion $(X_t)_{t \ge 0}$ on a simple planar nested fractal $K$ and its associated unbounded fractal $K^{\langle\infty\rangle}$ was rigororusly constructed 
in~\cite{Kus89,Lin90}, see also~\cite{Fuk92,Kum93}.
This Brownian motion is a Hunt diffusion process whose
associated heat semigroup $\{P_t^{K^{\langle\infty\rangle}}\}_{t\ge 0}$ admits a jointly continuous heat kernel with respect to the $d_h$-dimensional Hausdorff measure $\mu^{\langle\infty\rangle}$. We denote this kernel by $p_t^{K^{\langle\infty\rangle}}(x,y)$. 
It satisfies the scaling property
\begin{equation}
    p_t^{K^{\langle\infty\rangle}}(x,y)=Mp^{K^{\langle\infty\rangle}}_{L^{d_w}t} (Lx,Ly)
\end{equation}
for any $x,y \in K^{\langle\infty\rangle}$ and $t>0$, as well as the 
sub-Gaussian estimates
\begin{equation}\label{E:HK_subG_Kinfty}
c_1t^{-d_{h}/d_{w}}\exp\biggl(-c_2\Bigl(\frac{d(x,y)^{d_{w}}}{t}\Bigr)^{\frac{1}{d_{J}-1}}\biggr) \le p^{K^{\langle\infty\rangle}}_t(x,y)
\le  c_3 t^{-d_{h}/d_{w}}\exp\biggl(-c_4\Bigl(\frac{d(x,y)^{d_{w}}}{t}\Bigr)^{\frac{1}{d_{J}-1}}\biggr)
\end{equation}
for every $(x,y)\in K^{\langle\infty\rangle}\times K^{\langle\infty\rangle}$ and $t >0$, c.f.~\cite[Theorem 5.2, Theorem 5.5]{Kum93}.
The parameter $d_J>0$ is the dimension of the Brownian motion $X_t$ in the shortest path metric, while the parameter $d_w$ is the dimension of $X_t$ in the Euclidean metric. The latter is usually called the walk dimension of $K^{\langle\infty\rangle}$.

%

\medskip

The estimates~\eqref{E:HK_subG_Kinfty} where obtained under a particular assumption~\cite[Assumption 2.2]{Kum93} on the underlying metric that is satisfied in the case of planar simple nested fractals, c.f.~\cite[Appendix]{KOP19}. Although in general $d_J\neq d_w$, we will assume throughout of the paper that $d_J=d_w$ since that is the case in the examples that are presented. The main results are likely true after possibly minor changes without the assumption $d_J=d_W$, however the paper focuses on building most immediate counterexamples, leaving a generalization for possible future work.

\medskip

The estimates~\eqref{E:HK_subG_Kinfty} also provide the following bound on the exit time of balls proved in~\cite[Lemma 2.3]{Ham11} which will be used in Lemma~\ref{lem2}. Here and in the sequel we define $\tau_{U^c}:=\inf\{t>0\colon X_t\notin U\}$ for any $U\subset K^{\langle\infty\rangle}$.

\begin{lem}\label{L:hitting_time}
For any closed set $U\subset K^{\langle\infty\rangle}$ there exist $C,c>0$ such that
\begin{equation}\label{E:hitting time}
\mathbb{P}_x(\tau_{U^c}<t)\leq Ce^{-c\big(\frac{d(x,U^c)^{d_w}}{t}\big)^{\frac{1}{d_w-1}}}
\end{equation}
for any $x\in U$ and $t>0$.
\end{lem}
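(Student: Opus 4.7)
The plan is to reduce the statement to an exit-time estimate for a ball, which in turn follows from the sub-Gaussian heat kernel upper bound~\eqref{E:HK_subG_Kinfty} via a standard strong Markov argument. Set $r := d(x, U^c)$; the case $r = 0$ is trivial. Since $U$ is closed and the Hunt diffusion has continuous paths, $X_s \in U$ for every $s \in [0, \tau_{U^c})$, so the event $\{\tau_{U^c} < t\}$ is contained in $\{\tau_{B(x,r)^c} \leq t\}$. It therefore suffices to prove
\[
\mathbb{P}_x\bigl(\tau_{B(x,r)^c} \leq t\bigr) \leq C \exp\Bigl(-c \bigl(r^{d_w}/t\bigr)^{1/(d_w-1)}\Bigr)
\]
for every $x \in K^{\langle\infty\rangle}$ and every $r, t > 0$, with constants depending only on $K^{\langle\infty\rangle}$.

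The first step is a displacement estimate obtained by integrating the upper bound in~\eqref{E:HK_subG_Kinfty} over dyadic annuli. Decomposing $\{y : d(y,x) \geq r\}$ into $A_k := B(x, 2^{k+1} r) \setminus B(x, 2^k r)$ for $k \geq 0$, using the Ahlfors regularity~\eqref{eq:volume} to bound $\mu(A_k) \leq C (2^k r)^{d_h}$, and combining this with the pointwise heat kernel bound on $A_k$, one obtains a sum of terms of the form $C (2^k r)^{d_h} t^{-d_h/d_w} \exp\bigl(-c (2^{k d_w} r^{d_w}/t)^{1/(d_w-1)}\bigr)$. The super-geometric decay of the exponential in $k$ dominates the polynomial factor $2^{k d_h}$, so the series is controlled by its first term and yields
\[
\mathbb{P}_x\bigl(d(X_t, x) \geq r\bigr) = \int_{\{y\,:\, d(y,x) \geq r\}} p_t^{K^{\langle\infty\rangle}}(x,y)\,d\mu(y) \leq C_1 \exp\Bigl(-c_1 \bigl(r^{d_w}/t\bigr)^{1/(d_w-1)}\Bigr).
\]

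The second step passes from displacement to exit time via the strong Markov property. Writing $\tau := \tau_{B(x,r)^c}$ and splitting according to the position at time $t$,
\[
\mathbb{P}_x(\tau \leq t) \leq \mathbb{P}_x\bigl(d(X_t, x) \geq r/2\bigr) + \mathbb{P}_x\bigl(\tau \leq t,\, d(X_t, x) < r/2\bigr).
\]
On the second event $d(X_\tau, x) \geq r$, so the process must travel a distance of at least $r/2$ during $[\tau, t]$; conditioning on $\mathcal{F}_\tau$ and invoking Step 1 with time parameter $s \leq t$ (whose worst case is $s = t$) gives
\[
\mathbb{P}_x\bigl(\tau \leq t,\, d(X_t, x) < r/2\bigr) \leq \mathbb{P}_x(\tau \leq t) \cdot C_1 \exp\Bigl(-c_1 \bigl((r/2)^{d_w}/t\bigr)^{1/(d_w-1)}\Bigr).
\]

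In the regime $r^{d_w}/t \geq K$ for a sufficiently large constant $K$, the right-hand factor above is at most $1/2$ and can be absorbed, yielding $\mathbb{P}_x(\tau \leq t) \leq 2\,\mathbb{P}_x(d(X_t, x) \geq r/2)$, which together with Step 1 furnishes the claimed inequality (after relabeling constants). In the complementary regime $r^{d_w}/t \leq K$ the stretched-exponential factor $\exp\bigl(-c(r^{d_w}/t)^{1/(d_w-1)}\bigr)$ is bounded away from zero, and the inequality holds trivially by choosing $C$ large enough. There is no serious obstacle: the argument is a fairly routine application of the sub-Gaussian heat kernel upper bound together with Ahlfors regularity, and the only point requiring care is verifying that the dyadic sum in Step 1 indeed inherits the sub-Gaussian tail from its leading term.
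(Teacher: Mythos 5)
Your argument is correct and is precisely the standard proof of this exit-time estimate (reduce to the ball $B(x,d(x,U^c))$, derive a displacement bound by integrating the sub-Gaussian upper bound \eqref{E:HK_subG_Kinfty} over dyadic annuli using Ahlfors regularity, then self-improve via the strong Markov property). The paper does not prove this lemma at all—it simply cites \cite[Lemma 2.3]{Ham11}—and your proof is the one underlying that cited result, so there is nothing to compare beyond noting that your route is the expected one.
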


In addition, Section~\ref{S:HS_functional} will deal with the process killed outside of a compact $F\subset  K^{\langle\infty\rangle}$, whose associated Dirichlet heat kernel we denote by $p_t^F(x,y)$. For any $A \subset F$ and $x \in F$ this kernel satisfies
\begin{equation}\label{E:def_Dirichlet_HK}
\mathbb{P}_x( X_t \in A, t \le \tau_{F^c} )=\int_A p_t^F(x,y) d\mu^{\langle\infty\rangle}(y)
\end{equation}
and the scaling invariance property 
\begin{align}\label{scaling}
p_t^{\psi_w(F)}(\psi_w(x),\psi_w(y))=M^n p^F_{L^{nd_w}t} (x,y)
\end{align}
for any $x,y \in F$, and $w\in W_n$, $n\geq 1$, c.f.~\cite[Theorem 3.2, (ii)]{Ham11}.

\subsection{Korevaar-Schoen-Sobolev and BV spaces on fractals}\label{S:KS_BV}

The BV measures investigated in the present paper were introduced in~\cite{BV3} to study the case $p=1$ of the heat semigroup based Besov spaces $\mathbf{B}^{p, \alpha}(K^{\langle\infty\rangle})$ defined below. These spaces also admit a Korevaar-Schoen-Sobolev characterization, c.f. Theorem~\ref{main:BV3}, which will be key to prove in Section~\ref{SS:KS_non-limit} the non-trivial oscillations of the BV measures on the Vicsek set and the Sierpinski gasket.

\begin{defn}[Heat semigroup Besov classes]
For any $p \ge 1$ and $\alpha>0$, define the heat semigroup Besov seminorm
\begin{equation*}
    \|f\|_{p,\alpha}:=\sup_{t>0} \, t^{-\alpha} \left(\int_{K^{\langle\infty\rangle}}\int_{K^{\langle\infty\rangle}} p_t^{K^{\langle\infty\rangle}}(x,y)|f(x)-f(y)|^pd\mu^{\langle\infty\rangle}(x)d\mu^{\langle\infty\rangle}(y)\right)^{1/p}
\end{equation*}
and the heat semigroup based Besov class
\[
\mathbf{B}^{p, \alpha}(K^{\langle\infty\rangle}):=\left\{f\in L^p(K^{\langle\infty\rangle},\mu^{\langle\infty\rangle}), \|f\|_{p,\alpha}<\infty\right\}.
\]
\end{defn}
It was proved in~\cite[Proposition 4.14]{BV1} that $(\mathbf{B}^{p, \alpha}(K^{\langle\infty\rangle}), \| \cdot \|_{p,\alpha}+\|\cdot\|_{L^p(K,\mu)})$ is a complete Banach space. 

\begin{defn}[BV class]\label{def:pKS}
For any $r>0$ and $f\in L^1(K^{\langle\infty\rangle},\mu^{\langle\infty\rangle})$ let
\begin{equation*}
\widetilde{\mathcal{M}}_f(r):=\frac{1}{r^{d_h}}\int_{K^{\langle\infty\rangle}}\int_{B(x,r)\cap K^{\langle\infty\rangle}}|f(x)-f(y)|\,d\mu^{\langle\infty\rangle}(y)\,d\mu^{\langle\infty\rangle}(x).
\end{equation*}
The BV class is defined as 
\[
BV(K^{\langle\infty\rangle}):=
\Big\{f\in L^1(K^{\langle\infty\rangle},\mu^{\langle\infty\rangle})\colon \limsup_{r\to 0^+} \frac{1}{r^{d_h}}\widetilde{\mathcal{M}}_f(r)<\infty\Big\}. 
\]
\end{defn}
In the framework of {unbounded} nested fractals, the space $BV(K^{\langle\infty\rangle})$ is known to contain linear combinations of indicator functions of $n$-cells, see~\cite[Theorem 5.1]{BV3} and is therefore dense in $L^1(K^{\langle\infty\rangle},\mu^{\langle\infty\rangle})$. 
The main motivation for the present work is the close relation between the BV class $BV(K^{\langle\infty\rangle})$ and the Besov classes defined above. Introducing the functional
\begin{equation}\label{E:def_Mf}
\mathcal{M}_f (t) :=\int_{K^{\langle\infty\rangle}} \int_{K^{\langle\infty\rangle}} |f (x) -f(y)| p^{K^{\langle\infty\rangle}}_t(x,y) \, d\mu^{\langle\infty\rangle} (x) d\mu^{\langle\infty\rangle}(y),
\end{equation}
for $f \in L^1(K^{\langle\infty\rangle},\mu^{\langle\infty\rangle})$, an immediate consequence of~\cite[Theorem 4.24]{BV3} is the following characterization of BV.

\begin{thm}[Heat semigroup characterization of BV functions]\label{main:BV3}
For an unbounded nested fractal $K^{\langle\infty\rangle}$ with Hausdorff dimension $d_h$ and walk dimension $d_w$,
\[
BV(K^{\langle\infty\rangle})=\mathbf{B}^{1, d_h/d_w}(K^{\langle\infty\rangle})
\]
with equivalent norms. More precisely, there exist constants $c,C>0$ such that for every $f \in BV(K^{\langle\infty\rangle})$,

\begin{equation*}
c \, \sup_{t>0} \, t^{-d_h/d_w} 
\mathcal{M}_f (t)
\le  \sup_{r >0} \frac{1}{r^{d_h}}\widetilde{\mathcal{M}}_f(r)
\le  C \,  \liminf_{t \to 0^+} \, t^{-d_h/d_w} 
\mathcal{M}_f (t).
\end{equation*}
\end{thm}

\subsection{A renewal lemma}
One of the main ingredients in the proof of the main results in Theorem~\ref{T:liminf_Ucells_heat} and Theorem~\ref{T:liminf_Ucells_KS} is the following renewal lemma adapted from~\cite[Lemma 3.5]{Ham11}. Here and throughout the paper, given any two functions $f, g\colon (0,+\infty) \to \mathbb{R}$, we will write
\[
f(t) \simeq g(t)
\]
if there exist constants $c,C>0$ such that for every $t \in (0,1]$,
\begin{equation}\label{E:def_simeq}
| f(t)-g(t) | \le c e^{ -Ct^{-\frac{1}{d_w-1}}}.
\end{equation}

\begin{lem}[Renewal lemma~{\rm\cite{Ham11}}]\label{L:renewal_lemma}
Let $\alpha, \beta >0$ with $\alpha \ge 1$, $\beta <1$ and let $f\colon (0,+\infty) \to (0,+\infty)$ be a continuous bounded function such that
\[
f(t) \simeq \alpha f(\beta t).
\]
Then, there  exists a periodic function $\theta$ with period $\beta$ such that
\[
f(t)=t^{-\frac{\ln \alpha }{\ln \beta }} \theta(- \ln t) +o\left( t^{-\frac{\ln \alpha }{\ln \beta }}\right)
\]
exists as $t\to 0^+$.
\end{lem}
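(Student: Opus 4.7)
The plan is to convert the multiplicative quasi-scaling $f(t) \simeq \alpha f(\beta t)$ into an approximate additive periodicity by a logarithmic change of variables, and then to extract a genuinely periodic profile via a telescoping argument whose absolute convergence is guaranteed by the doubly exponential decay in the definition of $\simeq$. Concretely, set $a := -\ln\alpha/\ln\beta \geq 0$ and $T := -\ln\beta > 0$, introduce $u := -\ln t$, and define
\[
g(u) := e^{ua}\, f(e^{-u}) = t^{-\ln\alpha/\ln\beta}\, f(t).
\]
Since $e^{Ta} = e^{-a\ln\beta} = \alpha$, the hypothesis $f(t)\simeq \alpha f(\beta t)$ translates to
\[
|g(u+T) - g(u)| \;\leq\; c\, e^{ua}\exp\!\bigl(-C\, e^{u/(d_w-1)}\bigr),
\]
so $g$ is approximately $T$-periodic with doubly-exponentially-small one-step error. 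Boundedness of $g$ follows by writing any $t\in(0,1]$ as $t=\beta^n t_0$ with $t_0\in[\beta,1]$ and iterating the scaling finitely many times, using that $\alpha^n t^a = t_0^a$ and that $f$ is bounded.

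Next I would build the periodic profile. For each fixed $u>0$, consider the sequence $g_n(u):=g(u+nT)$. Telescoping gives
\[
|g_n(u) - g_m(u)| \;\leq\; \sum_{k=m}^{n-1} c\, e^{(u+kT)a}\exp\!\bigl(-C\, e^{(u+kT)/(d_w-1)}\bigr),
\]
and since the right-hand side is the tail of a doubly-exponentially decaying series, it is summable uniformly on compact sets of $u$. Hence $g_n(u)$ is Cauchy and converges to some $\theta(u)$. By construction $\theta(u+T)=\theta(u)$, continuity of $f$ together with the uniform convergence yields continuity of $\theta$, and boundedness of $g$ implies boundedness of $\theta$.

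Finally, comparing $g$ with $\theta$ by the same telescope,
\[
|g(u) - \theta(u)| \;\leq\; \sum_{k=0}^{\infty} |g(u+kT) - g(u+(k+1)T)| \;\longrightarrow\; 0 \quad \text{as } u\to\infty,
\]
again by the doubly exponential decay. Reverting to the original variable gives
\[
f(t) \;=\; t^{-\ln\alpha/\ln\beta}\,\theta(-\ln t) \;+\; o\!\bigl(t^{-\ln\alpha/\ln\beta}\bigr) \qquad \text{as } t\to 0^+,
\]
with $\theta$ periodic of period $T=-\ln\beta$ (which, modulo the renormalization of the argument, matches the period stated in the lemma). The only delicate point, and the main obstacle, is checking that the single-exponential prefactor $e^{ua}$ in the one-step error for $g$ is genuinely dominated by the doubly-exponential factor $\exp(-Ce^{u/(d_w-1)})$; once this is verified the telescope is absolutely convergent and the entire renewal argument falls into place without further difficulty.
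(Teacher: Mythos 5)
Your proof is correct and follows essentially the same route as the paper: the logarithmic change of variables $g(u)=e^{ua}f(e^{-u})$, the translation of $f(t)\simeq\alpha f(\beta t)$ into an approximate additive periodicity of $g$ with super-exponentially small one-step error, and the telescoping construction of the periodic profile $\theta$ (the paper writes it as a bi-infinite telescoping sum, you as $\lim_n g(u+nT)$, which is the same device). The "delicate point" you flag is harmless, since $e^{ua}\exp(-Ce^{u/(d_w-1)})$ still decays doubly exponentially, exactly as the paper implicitly uses.
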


\begin{proof}
Let $g(t)=e^{\gamma t} f(e^{-t})$ where $\gamma =-\frac{\ln \alpha }{\ln \beta }$ so that $f(t)= t^\gamma g(- \ln t)$. For $t \in (0,1]$, by definition of the relation $\simeq$ in~\eqref{E:def_simeq}, we have
\[
\left| t^\gamma g(- \ln t) - \alpha \beta^ \gamma t^\gamma  g(- \ln t -\ln \beta) \right| \le c e^{ -Ct^{-\frac{1}{d_w-1}}}
\]
and therefore
\[
\left| t^\gamma g(- \ln t) - \alpha \beta^ \gamma t^\gamma  g(- \ln t -\ln \beta) \right| \le c e^{ -Ct^{-\frac{1}{d_w-1}}}
\]
which implies
\[
\left| g(- \ln t) -  g(- \ln t -\ln \beta) \right| \le c t^{-\gamma} e^{ -Ct^{-\frac{1}{d_w-1}}}.
\]
Thus, for $t \ge 1$,
\[
\left| g(- \ln t) -  g(- \ln t -\ln \beta) \right|  \le C t^{-\gamma}
\]
and hence 
\[
\left| g( t) -  g(t -\ln \beta) \right| \le c e^{-C | t |}
\]
for some constants $c,C>0$ and $t \in \mathbb R$. 
Setting
\[
\theta (t):=\sum_{k=-\infty}^{+\infty} ( g(t -k \ln \beta) -  g(t -(k+1) \ln \beta)),
\]
the lemma follows.
\end{proof}

\section{Heat semigroup functional}\label{S:HS_functional}

The aim of this section is to prove the first part of the main result, Theorem~\ref{T:liminf_Ucells_intro}, which involves the heat semigroup functional $\mathcal{M}_f(t)$ from~\eqref{E:def_Mf}.

\begin{thm}\label{T:liminf_Ucells_heat}
Let $K^{\langle\infty\rangle}$ be an unbounded nested fractal as in Section~\ref{S:setup} with Hausdorff dimension $d_h$ and walk dimension $d_W$. There exists a bounded periodic function $\Phi: (0,+\infty)\to [a,b]$ with period $L^{-d_w}$ and $0 < a  \le b$ such that, for any finite union of $n$-cells $U\subset K^{\langle\infty\rangle}$,
\begin{equation}\label{E:liminf_Ucells_heat}
\lim_{t\to 0^+}\Phi(-\ln t)\frac{1}{t^{d_h/d_w}}\mathcal{M}_{\mathbf{1}_U}(t)=|\partial U|,
\end{equation}
where $|\partial U|$ denotes the number of points in the boundary of $U$.
\end{thm}

%
The proof of Theorem~\ref{T:liminf_Ucells_heat} follows a strategy developed by Hambly in~\cite{Ham11} and is divided into several lemmas presented in the next section.

\subsection{Preliminary lemmas}
To show Theorem~\ref{T:liminf_Ucells_heat} we introduce the auxiliary functional
\begin{equation}\label{E:def_heat_sets}
\mathcal{M}_{A,B}(t):=\int_{A}\int_{B}p^{K^{\langle\infty\rangle}}_t(x,y)\,d\mu^{\langle\infty\rangle}(y)\,d\mu^{\langle\infty\rangle}(x)
\end{equation}
for any two compact $A,B \subset K^{\langle\infty\rangle}$ and $t>0$. As a consequence of Lemma~\ref{L:cell_pair_asymp_KS}, the asymptotic behavior of the functional $\mathcal{M}_{A,B}(t)$ will mainly depend on the behavior of the heat kernel near $A\cup B$. In the sequel, the $r$-neighborhood of any $A \subset K^{\langle\infty\rangle}$ will be denoted by
\[
A_r:=\{ x \in K^{\langle\infty\rangle}\colon d (x,A) \le r\}.
\]

\begin{lem}\label{lem2}
For any compact sets $A,B \subset K^{\langle\infty\rangle}$ and $r >0$,
\[
\mathcal{M}_{A,B}(t) \simeq \int_{A}\int_{B}p^{(A \cup B)_r}_t(x,y)\,d\mu^{\langle\infty\rangle}(y)\,d\mu^{\langle\infty\rangle}(x).
\]
\end{lem}

\begin{proof}
By definition of Dirichlet kernel, c.f.~\eqref{E:def_Dirichlet_HK},
\[
\int_{A}\int_{B}p^{(A \cup B)_r}_t(x,y)\,d\mu^{\langle\infty\rangle}(y)\,d\mu^{\langle\infty\rangle}(x)=\int_A \mathbb{P}_x( X_t \in B, t \le \tau_{((A \cup B)_r)^c} )\, d\mu^{\langle\infty\rangle}(x).
\]
Moreover,
\begin{align*}
\mathbb{P}_x( X_t \in B)=\mathbb{P}_x( X_t \in B, t \le \tau_{((A \cup B)_r)^c} )+\mathbb{P}_x( X_t \in B, t > \tau_{((A \cup B)_r)^c} ),
\end{align*}
hence
\begin{equation*}
 \left| \mathbb{P}_x( X_t \in B)-\mathbb{P}_x( X_t \in B, t \le \tau_{((A \cup B)_r)^c} )\right| \le \mathbb{P}_x(  t > \tau_{((A \cup B)_r)^c} )
 \leq Ce^{-c\big(\frac{d(x,((A \cup B)_r)^c)^{d_w}}{t}\big)^{\frac{1}{d_w-1}}},
\end{equation*}
where the last inequality follows from Lemma~\ref{L:hitting_time}. 
Since $ d(x,((A \cup B)_r)^c) \ge r$ for any $ x \in A$, we obtain
\[
\mathbb{P}_x(  t > \tau_{((A \cup B)_r)^c} ) \leq Ce^{-c\big(\frac{r^{d_w}}{t}\big)^{\frac{1}{d_w-1}}}
\]
which yields
\begin{multline*}
 \left| \int_{A}\int_{B}p^{K^{\langle\infty\rangle}}_t(x,y)\,d\mu^{\langle\infty\rangle}(y)\,d\mu^{\langle\infty\rangle}(x) -
\int_{A}\int_{B}p^{(A \cup B)_r}_t(x,y)\,d\mu^{\langle\infty\rangle}(y)\,d\mu^{\langle\infty\rangle}(x) \right| \\
\leq C\mu^{\langle\infty\rangle}(A) e^{-c\big(\frac{r^{d_w}}{t}\big)^{\frac{1}{d_w-1}}}.
\end{multline*}
\end{proof}

The next step consists in proving a scaling and a localization property of the functional $\mathcal{M}_{A,B}(t)$ that will allow us to compare its behavior across different levels. In particular, when the sets $A,B$ are well separated, the associated functional $\mathcal{M}_{A,B}(t)$ becomes asymptotically negligible.

\begin{lem}[Scaling lemma]\label{L:scaling lemma}
For any $w\in W_n$, $n\geq 1$, and compact sets $A,B \subset K^{\langle\infty\rangle}$,
\begin{equation}\label{E:scaling_M2cells}
\mathcal{M}_{A,B}(t) \simeq M^n\mathcal{M}_{\psi_w(A),\psi_w(B)}(L^{-nd_w}t).
\end{equation}
\end{lem}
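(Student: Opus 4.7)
The plan is to localize both sides of~\eqref{E:scaling_M2cells} to Dirichlet heat kernels via Lemma~\ref{lem2}, and then transfer between scales using the scaling identity~\eqref{scaling} together with the change of variables induced by~$\psi_w$.

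First I would fix $r>0$ and set $F:=(A\cup B)_r$. By Lemma~\ref{lem2} one has
\begin{equation*}
\mathcal{M}_{A,B}(t)\simeq I(t):=\int_{A}\int_{B}p_{t}^{F}(x,y)\,d\mu(y)\,d\mu(x).
\end{equation*}
Substituting $t\mapsto L^{-nd_w}t$ in~\eqref{scaling} yields $p_t^F(x,y)=M^{-n}p_{L^{-nd_w}t}^{\psi_w(F)}(\psi_w(x),\psi_w(y))$, and since $\mu(\psi_w(E))=M^{-n}\mu(E)$ for measurable $E$, the substitution $u=\psi_w(x)$, $v=\psi_w(y)$ contributes a Jacobian factor $M^{2n}$, giving
\begin{equation*}
I(t)=M^n\int_{\psi_w(A)}\int_{\psi_w(B)}p_{L^{-nd_w}t}^{\psi_w(F)}(u,v)\,d\mu(v)\,d\mu(u).
\end{equation*}

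To conclude I would apply Lemma~\ref{lem2} a second time at the fine scale, with radius $L^{-n}r$ and time $L^{-nd_w}t$, to compare the right-hand side above with $M^n\mathcal{M}_{\psi_w(A),\psi_w(B)}(L^{-nd_w}t)$. The quantitative point is that the exit-time bound of Lemma~\ref{L:hitting_time} in this second application produces an error of size $M^{-n}\mu(A)\exp(-c(r^{d_w}/t)^{1/(d_w-1)})$: the factor $M^{-n}$ exactly cancels the prefactor $M^n$, while the exponent is independent of~$n$, so the combined error fits within the tolerance~\eqref{E:def_simeq}.

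The main obstacle is bookkeeping between two Dirichlet domains that appear in the argument: the domain $\psi_w(F)=\psi_w((A\cup B)_r)$ inherited from scaling, and the neighborhood $(\psi_w(A)\cup\psi_w(B))_{L^{-n}r}$ produced by the direct application of Lemma~\ref{lem2} to $\psi_w(A),\psi_w(B)$. These differ in general because points of $K^{\langle\infty\rangle}\setminus\psi_w(K^{\langle\infty\rangle})$ may lie Euclidean-close to $\psi_w(A)$. The resolution is that the diffusion can cross from $\psi_w(K^{\langle\infty\rangle})$ to the rest of $K^{\langle\infty\rangle}$ only through the finitely many junction points on the boundary of $\psi_w(K^{\langle\infty\rangle})$, which lie at distance $\gtrsim L^{-n}r$ from $\psi_w(A)$; Lemma~\ref{L:hitting_time} then yields comparable sub-Gaussian exit bounds on both candidate Dirichlet domains and the mismatch is absorbed by the $\simeq$ relation.
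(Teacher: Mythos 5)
Your route is the same as the paper's: localize both functionals to Dirichlet heat kernels via Lemma~\ref{lem2}, transport one to the other using the exact scaling identity~\eqref{scaling} together with the change of variables $u=\psi_w(x)$ (Jacobian $M^{2n}$ against the kernel factor $M^{-n}$), and localize back. Your bookkeeping is correct and in fact more explicit than the paper's chain of $\simeq$'s; in particular the observation that $\mu(\psi_w(A))=M^{-n}\mu(A)$ cancels the prefactor $M^n$ and that $\bigl((L^{-n}r)^{d_w}/(L^{-nd_w}t)\bigr)^{1/(d_w-1)}=(r^{d_w}/t)^{1/(d_w-1)}$ is a genuine improvement in precision.

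The difficulty is exactly the one you flag in your last paragraph, but your proposed resolution does not hold as stated. The junction points through which the diffusion leaves $\psi_w(K^{\langle\infty\rangle})$ are \emph{not} in general at distance $\gtrsim L^{-n}r$ from $\psi_w(A)$: already for $A=K$ the images of the boundary vertices of $K$ are junction points lying \emph{inside} $\psi_w(A)$, so for starting points $u$ near such a point $q$ one has $d(u,\psi_w((A\cup B)_r)^c)=o(1)$ and the bound of Lemma~\ref{L:hitting_time} degenerates. Integrating the exit probability over $u\in\psi_w(A)$ then only yields an error of order $\mu(B(q,t^{1/d_w}))\asymp t^{d_h/d_w}$, which is the order of the main term and is not admissible for~\eqref{E:def_simeq}. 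The obstruction is real and not merely technical: for $A=B=K$ and $w=(2)$ on the Sierpinski gasket, $\mathcal{M}_{K,K}(t)=1-|\partial K|\,\theta(-\ln t)t^{d_h/d_w}+o(t^{d_h/d_w})$ while $M\mathcal{M}_{K_2,K_2}(L^{-d_w}t)=1-|\partial K_2|\,\theta(-\ln t)t^{d_h/d_w}+o(t^{d_h/d_w})$, and $|\partial K|=2\neq 3=|\partial K_2|$, so the two sides of~\eqref{E:scaling_M2cells} differ by a term of order $t^{d_h/d_w}$. A correct argument in the configurations actually needed for Lemma~\ref{L:asymp_heat_cells} (adjacent complexes $A,B$ and $\psi_w$ the similitude fixing their common point $p$) requires the strong Markov property at each junction point $q\neq p$ together with the fact that such $q$ lies at positive distance from at least one of $\psi_w(A)$, $\psi_w(B)$, so that either the hitting probability from $\psi_w(A)$ or the return probability to $\psi_w(B)$ is exponentially small. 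To be fair, the paper's own proof passes over this step with an unjustified $\simeq$ between the domains $(\psi_w(A)\cup\psi_w(B))_r$ and $\psi_w((A\cup B)_{L^{-n}r})$; you at least identified where the difficulty sits, but the patch you propose would fail.
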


\begin{proof}
From Lemma \ref{lem2} and the scaling property of the heat kernel~\eqref{scaling},
\begin{align*}
\mathcal{M}_{\psi_w(A),\psi_w(B)}(t) & \simeq  \int_{\psi_w(A)}\int_{\psi_w(B)}p^{(\psi_w(A) \cup \psi_w(B))_r}_t(x,y)\,d\mu^{\langle\infty\rangle}(y)\,d\mu^{\langle\infty\rangle}(x) \\
 &  \simeq M^{-2n} \int_{A}\int_{B}p^{(\psi_w(A) \cup \psi_w(B))_r}_t(\psi_w(x),\psi_w(y))\,d\mu^{\langle\infty\rangle}(y)\,d\mu^{\langle\infty\rangle}(x) \\
 &  \simeq M^{-2n} \int_{A}\int_{B}p^{\psi_w ((A \cup B)_{L^{-n}r})}_t(\psi_w(x),\psi_w(y))\,d\mu^{\langle\infty\rangle}(y)\,d\mu^{\langle\infty\rangle}(x) \\
 &  \simeq M^{-n} \int_{A}\int_{B}p^{(A \cup B)_{L^{-n}r}}_{L^{nd_w}t}(x,y)\,d\mu^{\langle\infty\rangle}(y)\,d\mu^{\langle\infty\rangle}(x) \\
 & \simeq M^{-n} \int_{A}\int_{B}p^{K^{\langle\infty\rangle}}_{L^{nd_w}t}(x,y)\,d\mu^{\langle\infty\rangle}(y)\,d\mu^{\langle\infty\rangle}(x) \\
 & \simeq M^{-n} \mathcal{M}_{A,B}(L^{nd_w}t).
\end{align*}
\end{proof}

\begin{lem}[Localization lemma]\label{L:localization}
Let $A,B\subset K^{\langle\infty\rangle}$ be compact and let $\tilde A \subset A$, $\tilde B \subset B$ be such that $d(\tilde A , B \setminus  \tilde B) >0$, $d(A \setminus \tilde A , B \setminus  \tilde B) >0$ and $d(A \setminus \tilde A ,  \tilde B) >0$. Then,
\begin{equation*}
\mathcal{M}_{A,B}(t) \simeq \mathcal{M}_{\tilde A,\tilde B}(t).
\end{equation*}
In particular, if $d( A , B ) >0$,
\[
\mathcal{M}_{A,B}(t) \simeq 0.
\]
\end{lem}

\begin{proof}
Decomposing $A \times B$ accordingly,
\begin{align*}
\mathcal{M}_{A,B}(t) &= \int_{A}\int_{B}p^{K^{\langle\infty\rangle}}_t(x,y)\,d\mu^{\langle\infty\rangle}(y)\,d\mu^{\langle\infty\rangle}(x) \\
   &= \int_{\tilde A}\int_{ \tilde B}p^{K^{\langle\infty\rangle}}_t(x,y)\,d\mu^{\langle\infty\rangle}(y)\,d\mu^{\langle\infty\rangle}(x) + \int_{\tilde A}\int_{ B \setminus  \tilde B}p^{K^{\langle\infty\rangle}}_t(x,y)\,d\mu^{\langle\infty\rangle}(y)\,d\mu^{\langle\infty\rangle}(x) \\
   &+  \int_{A \setminus \tilde A}\int_{ B \setminus  \tilde B}p^{K^{\langle\infty\rangle}}_t(x,y)\,d\mu^{\langle\infty\rangle}(y)\,d\mu^{\langle\infty\rangle}(x) +  \int_{A \setminus \tilde A}\int_{  \tilde B}p^{K^{\langle\infty\rangle}}_t(x,y)\,d\mu^{\langle\infty\rangle}(y)\,d\mu^{\langle\infty\rangle}(x).
\end{align*}
The result now follows by observing that the upper heat kernel estimate~\eqref{E:HK_subG_Kinfty} implies
\[
p^{K^{\langle\infty\rangle}}_t(x,y) \le  c t^{-d_{h}/d_{w}}\exp\biggl(-C \Bigl(\frac{d(U_1,U_2)^{d_{w}}}{t}\Bigr)^{\frac{1}{d_{w}-1}}\biggr)
\]
for any sets $U_1,U_2\subset K^{\langle\infty\rangle}$ and $x \in U_1$ and $y\in U_2$.
\end{proof}



\begin{remark}
The results presented in this section actually hold in a much larger class of fractals than that of unbounded nested, including infinitely ramified. Proceeding further will however require to restrict ourselves to the framework of unbounded nested fractals.
\end{remark}

\subsection{Proof of Theorem~\ref{T:liminf_Ucells_heat}}
We will now combine the results in the previous sections to prove Theorem~\ref{T:liminf_Ucells_heat}. When doing so, 
it becomes necessary to stay in the framework of nested fractals as defined in Definition~\ref{def:nested}. 
In particular the assumption
\begin{equation}\label{E:cell_point_intersection}
|K_w \cap K_v| \in \{ 0, 1 \}
\end{equation}
for any $n$-cells $K_w,K_v$ and $n\geq 1$, will play an important role.

\begin{lem}\label{L:asymp_heat_cells}
Let $w,v\in W_n$, $w\neq v$. There exists a periodic function $\theta_{v,w}\colon (0,+\infty) \to \mathbb{R}$  with period $L^{-d_w}$ such that 
\begin{equation}\label{E:asymp_heat_cells}
\mathcal{M}_{K_w,K_v}(t)=  
|K_w \cap K_v|
\, t^{\frac{d_h}{d_w}} \theta_{v,w} (-\ln t) +o \big( t^{d_h/d_w} \big)
\end{equation}
as $t\to 0^+$.
\end{lem}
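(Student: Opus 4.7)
The proof splits into two cases based on $|V_w^{\langle 0\rangle}\cap V_v^{\langle 0\rangle}|$, which takes values in $\{0,1\}$ by the standing assumption~\eqref{E:cell_point_intersection}. If the intersection is empty, the nesting axiom in Definition~\ref{def:nested} identifies $K_w\cap K_v$ with $V_w^{(0)}\cap V_v^{(0)}=\varnothing$, and compactness gives $d(K_w,K_v)>0$. The separation lemma stated immediately after Lemma~\ref{L:localization} then provides $\mathcal{M}_{K_w,K_v}(t)\simeq 0$, which is stronger than $o(t^{d_h/d_w})$ and matches~\eqref{E:asymp_heat_cells} with a vanishing leading coefficient.

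In the remaining case $V_w^{\langle 0\rangle}\cap V_v^{\langle 0\rangle}=\{x_0\}$, I would set $f(t):=\mathcal{M}_{K_w,K_v}(t)$ and aim to establish the renewal equation $f(t)\simeq M\, f(L^{-d_w}t)$ for $t\in(0,1]$, then invoke Lemma~\ref{L:renewal_lemma} with $\alpha=M$ and $\beta=L^{-d_w}$. A direct check gives $-\ln\alpha/\ln\beta=d_h/d_w$ with associated period $L^{-d_w}$, so the renewal lemma outputs a periodic $\theta_{v,w}$ of period $L^{-d_w}$ satisfying $f(t)=t^{d_h/d_w}\theta_{v,w}(-\ln t)+o(t^{d_h/d_w})$, which is precisely~\eqref{E:asymp_heat_cells}. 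Applying the scaling lemma (Lemma~\ref{L:scaling lemma}) with the length-one word $(1)\in W_1$ yields
\[
f(t)\simeq M\,\mathcal{M}_{K_{1w},K_{1v}}(L^{-d_w}t),
\]
so the renewal equation reduces to the identification $\mathcal{M}_{K_{1w},K_{1v}}(s)\simeq f(s)$ as $s\to 0^+$.

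This last identification is the main technical obstacle. Both $(K_w,K_v)$ and $(K_{1w},K_{1v})$ are pairs of cells meeting at a single essential fixed point, namely $x_0$ and $\psi_1(x_0)$ respectively. I would iterate the localization lemma (Lemma~\ref{L:localization}) on each side to reduce the functionals to contributions coming from arbitrarily small unions of subcells around the respective common vertices, using the sub-Gaussian hitting-time bound of Lemma~\ref{L:hitting_time} to keep the truncation error inside the $\simeq$ class. The single-intersection hypothesis~\eqref{E:cell_point_intersection} pins down a rigid combinatorial type for any essential fixed point shared by exactly two cells, and the symmetry axiom of Definition~\ref{def:nested} combined with the self-similarity of the nested fractal transports this local data from $x_0$ to $\psi_1(x_0)$, producing the asymptotic equivalence $\mathcal{M}_{K_{1w},K_{1v}}(s)\simeq f(s)$. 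Substitution into the scaling identity then yields the renewal equation and the proof concludes via Lemma~\ref{L:renewal_lemma}.
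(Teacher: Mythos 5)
Your treatment of the empty-intersection case and your identification of the target renewal equation $f(t)\simeq M f(L^{-d_w}t)$ with $\alpha=M$, $\beta=L^{-d_w}$ (hence exponent $d_h/d_w$ and period $L^{-d_w}$) are correct and match the paper. However, at the crucial step you diverge from the paper's argument and leave a genuine gap. By applying the scaling lemma with the word $(1)$ you are forced to compare $\mathcal{M}_{K_{1w},K_{1v}}$ with $\mathcal{M}_{K_w,K_v}$, i.e.\ two \emph{different} pairs of adjacent complexes meeting at two \emph{different} points $\psi_1(x_0)$ and $x_0$. Your proposed justification --- iterate localization and ``transport the local data'' via the symmetry axiom --- is an assertion of the conclusion rather than a proof: it requires exhibiting an isometry of $K^{\langle\infty\rangle}$ (or of a neighborhood of $K_w\cup K_v$ inside it) carrying $K_w\cup K_v$ onto $K_{1w}\cup K_{1v}$ and preserving the Dirichlet form, and the symmetry axiom of Definition~\ref{def:nested} does not obviously furnish such a map for an arbitrary pair of adjacent $n$-complexes. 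This comparison is essentially the content of the separate Lemma~\ref{L:periodic_indep}, which the paper only establishes \emph{after} the asymptotics are in hand, and even then only in the weak form $\theta_{w,v}-\theta_{1,2}\to 0$. Moreover, for the renewal Lemma~\ref{L:renewal_lemma} to apply you need the identification in the strong sense of the relation $\simeq$, i.e.\ with an error bounded by $c e^{-Ct^{-1/(d_w-1)}}$ uniformly on $(0,1]$; your phrasing ``$\simeq f(s)$ as $s\to 0^+$'' conflates this quantitative relation with a mere asymptotic statement, and an $o(1)$ relative error would not suffice.

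The paper sidesteps this entirely by a different choice of similitude: it takes $\psi$ to be the $L$-similitude fixing the common vertex $q$ itself. Then $\psi(K_w)$ and $\psi(K_v)$ are the subcomplexes of $K_w$ and $K_v$ adjacent to $q$, so $\psi(K_w)\subset K_w$ and $\psi(K_v)\subset K_v$, and the localization Lemma~\ref{L:localization} applies directly (the removed pieces $K_w\setminus\psi(K_w)$ and $K_v\setminus\psi(K_v)$ are at positive distance from the opposite sets by the one-point-intersection property~\eqref{E:cell_point_intersection}), giving $\mathcal{M}_{K_w,K_v}(t)\simeq\mathcal{M}_{\psi(K_w),\psi(K_v)}(t)$ with the required exponential error. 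Combining this with the scaling lemma for that same $\psi$ closes the renewal equation on the \emph{same} pair $(K_w,K_v)$, with no need to relate different pairs of cells. To repair your argument you should replace $\psi_1$ by this vertex-centered similitude; as written, the reduction ``$\mathcal{M}_{K_{1w},K_{1v}}(s)\simeq f(s)$'' is the missing heart of the proof.
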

\begin{proof}
Assume first $K_w \cap K_v=\{ q \}$. Consider the similitude $\psi$ with fixed point $q$ and contraction factor $L^{-1}$. The localization property in Lemma~\ref{L:localization} implies
\[
\mathcal{M}_{K_w,K_v}(t) \simeq \mathcal{M}_{\Psi(K_w),\Psi(K_v)}(t).
\]
Moreover, due to the scaling in property in Lemma~\ref{L:scaling lemma}, we also have
\[
\mathcal{M}_{\Psi(K_w),\Psi(K_v)}(t) \simeq M\mathcal{M}_{K_w,K_v}(L^{-d_w}t).
\]
Therefore,
\[
\mathcal{M}_{K_w,K_v}(t) \simeq M \mathcal{M}_{K_w,K_v}(L^{-d_w}t)
\]
and the renewal Lemma~\ref{L:renewal_lemma} yields~\eqref{E:asymp_heat_cells}. 
If $K_w \cap K_v=\emptyset$, then $d(K_v,K_w) >0$ so that
$\mathcal{M}_{K_w,K_v}(t) \simeq 0$ and the result follows as well.
\end{proof}

Finally we show that the periodic function is independent of the $n$-cells considered.
\begin{lem}\label{L:periodic_indep}
There exists a periodic function $\theta\colon (0,\infty)\to\mathbb{R}$ with period $L^{-d_w}$ such that for any pair of intersecting $n$-cells $K_w,K_v$
\[
\lim_{s \to +\infty }[ \theta_{w,v} (s) -\theta (s)]=0.
\]
\end{lem}

\begin{proof}
Indeed, one has first
\[
\mathcal{M}_{K_w,K_v}(t)  \simeq \int_{K_w}\int_{K_v}p^{K_w \cup K_v}_t(x,y)\,d\mu^{\langle\infty\rangle}(y)\,d\mu^{\langle\infty\rangle}(x).
\]
By scaling invariance and symmetry, 
\[
\int_{K_w}\int_{K_v}p^{K_w \cup K_v}_t(x,y)\,d\mu^{\langle\infty\rangle}(y)\,d\mu^{\langle\infty\rangle}(x) \simeq \int_{K_1}\int_{K_2}p^{K_1 \cup K_2}_t(x,y)\,d\mu^{\langle\infty\rangle}(y)\,d\mu^{\langle\infty\rangle}(x),
\]
where $K_1,K_2$ are two $1$-cells that intersect at one point. 
Thus, $\mathcal{M}_{K_w,K_v}(t) \simeq \mathcal{M}_{K_1,K_2}(t)$ and Lemma~\ref{L:asymp_heat_cells} implies the assertion with $\theta:=\theta_{1,2}$.
\end{proof}

\begin{proof}[Proof of Theorem~\ref{T:liminf_Ucells_heat}]
Recall that $K^{\langle\infty\rangle}$ satisfies~\eqref{E:cell_point_intersection}, e.g. it is an unbounded nested fractal like those based on the Sierpinski gasket or the Vicsek set. Let $U=\bigcup_{i \in I} K_{w_i}$ be a finite (connected) union of $n$-cells.
Then,

\begin{align*}
 &\int_{K^{\langle\infty\rangle}} \int_{K^{\langle\infty\rangle}} |1_U (x) -1_U(y)| p^{K^{\langle\infty\rangle}}_t(x,y) \, d\mu^{\langle\infty\rangle} (x) d\mu^{\langle\infty\rangle}(y) \\
 &=2 \int_{U} \int_{U^c} p^{K^{\langle\infty\rangle}}_t(x,y) \, d\mu^{\langle\infty\rangle} (x) d\mu^{\langle\infty\rangle}(y) \\
 &= 2 \sum_{i \in I} \int_{K_{w_i}} \int_{U^c} p^{K^{\langle\infty\rangle}}_t(x,y) \, d\mu^{\langle\infty\rangle} (x) d\mu^{\langle\infty\rangle}(y) \\
 &\simeq  2 \sum_{i \in I} \int_{K_{w_i}} \int_{K_{w_i}^*} p^{K^{\langle\infty\rangle}}_t(x,y) \, d\mu^{\langle\infty\rangle} (x) d\mu^{\langle\infty\rangle}(y),
\end{align*}
where 
\[
K_{w_i}^*:=\{ K_w, w \in W_n, \colon  K_w \subset \overline{K^{\langle\infty\rangle}\setminus U}, \, d(K_w ,K_{w_i})=0 \}
\]
denotes the set of $n$-cells in $K^{\langle\infty\rangle}{\setminus}U$ that intersect $K_w$ at one point. Figure~\ref{F:SG_outside_cells} shows an example in the Sierpinski gasket.
\begin{figure}[H]
    \centering
\begin{tikzpicture}[scale=1.5]
\tikzstyle{every node}=[draw,circle,fill=black,minimum size=1.5pt, inner sep=0pt]
\draw ($(0:0)$) node () {} --++ ($(0:2/8)$) node () {} --++ ($(120:2/8)$) node () {} --++ ($(240:2/8)$) node () {};
\draw ($(0:2/8)$) node () {} --++ ($(0:2/8)$) node () {} --++ ($(120:2/8)$) node () {} --++ ($(240:2/8)$) node () {};
\draw ($(60:2/8)$) node () {} --++ ($(0:2/8)$) node () {} --++ ($(120:2/8)$) node () {} --++ ($(240:2/8)$) node () {};
\foreach \a in {0,60}{
\draw ($(\a:2/4)$) node () {} --++ ($(0:2/8)$) node () {} --++ ($(120:2/8)$) node () {} --++ ($(240:2/8)$) node () {};
\foreach \b in{0,60}{
\draw ($(\a:2/4)+(\b:2/8)$)node () {} --++ ($(0:2/8)$) node () {} --++ ($(120:2/8)$) node () {} --++ ($(240:2/8)$) node () {};
}
}
\foreach \c in{0,60}{
\draw ($(\c:2/2)$) node () {} --++ ($(0:2/8)$) node () {} --++ ($(120:2/8)$) node () {} --++ ($(240:2/8)$) node () {};
\draw ($(\c:2/2)+(0:2/8)$) node () {} --++ ($(0:2/8)$) node () {} --++ ($(120:2/8)$) node () {} --++ ($(240:2/8)$) node () {};
\draw ($(\c:2/2)+(60:2/8)$) node () {} --++ ($(0:2/8)$) node () {} --++ ($(120:2/8)$) node () {} --++ ($(240:2/8)$) node () {};
\foreach \a in {0,60}{
\draw ($(\c:2/2)+(\a:2/4)$) node () {} --++ ($(0:2/8)$) node () {} --++ ($(120:2/8)$) node () {} --++ ($(240:2/8)$) node () {};
\foreach \b in{0,60}{
\draw ($(\c:2/2)+(\a:2/4)+(\b:2/8)$)node () {} --++ ($(0:2/8)$) node () {} --++ ($(120:2/8)$) node () {} --++ ($(240:2/8)$) node () {};
}
}
}
\filldraw[red,semitransparent] ($(0:0)+(60:1/4)$) --++ ($(60:6/4)$) --++ ($(300:1/4)$) --++ ($(180:1/4)$) --++ ($(300:1/2)$) --++ ($(60:1/4)$) --++ ($(300:1/4)$) --++ ($(180:3/4)$) --++ ($(300:3/4)$) --++ ($(180:1/4)$) --++ ($(60:1/4)$) --++ ($(180:1/2)$) --++ ($(300:1/4)$) --++ ($(180:1/4)$);
\coordinate[label=left:{\small $K_w$}] (K) at ($(0:0)+(90:1/3)$);
\draw[->] (K) --++ ($(0:1/4)$);
\filldraw[blue, semitransparent] ($(0:0)$) --++ ($(60:2/8)$) --++ ($(300:2/8)$) --++ ($(60:2/8)$) --++ ($(300:2/8)$);
\end{tikzpicture}
    \caption{The set $U$ in red. The two blue cells correspond to $K^*_w$.}
    \label{F:SG_outside_cells}
\end{figure}
Noting that $\sum_{i}|K_{w_i}^*|=|\partial U|$, by virtue of Lemma~\ref{L:asymp_heat_cells} and Lemma~\ref{L:periodic_indep} there exists a periodic function $\theta =2 \theta_{1,2}$ such that
\[
\mathcal{M}_{\mathbf{1}_U}(t)= | \partial U| t^{\frac{d_h}{d_w}} \theta (-\ln t) +o \big( t^{\frac{d_h}{d_w}} \big).
\]
Finally, since $1_U \in BV(K^{\langle\infty\rangle})$, it follows from Theorem~\ref{main:BV3} that
\[
\liminf_{t \to 0} t^{-\frac{d_h}{d_w}} \mathcal{M}_{\mathbf{1}_U}(t) >0
\]
and
\[
\limsup_{t \to 0} t^{-\frac{d_h}{d_w}} \mathcal{M}_{\mathbf{1}_U}(t)  <+\infty,
\]
hence $\theta$ is bounded from below and above and the conclusion follows.
\end{proof}
%
%
%
%
%
%
%
%

\section{Korevaar-Schoen functional}
The aim of this section is to show the part of the main result, Theorem~\ref{T:liminf_Ucells_intro}, concerning the Korevaar-Schoen type functional 
\begin{equation*}
\widetilde{\mathcal{M}}_f(r):=\frac{1}{r^{d_h}}\int_{K^{\langle\infty\rangle}}\int_{B(x,r)}|f(x)-f(y)|\,d\mu^{\langle\infty\rangle}(y)\,d\mu^{\langle\infty\rangle}(x),
\end{equation*}
$f\in BV(X)$. 
The functional $\widetilde{\mathcal{M}}_f(r)$ may be regarded as the metric-measure theoretic version of $\mathcal{M}_f$; note that the factor $r^{-d_h}$ is necessary to obtain the correct scaling property.

\begin{thm}\label{T:liminf_Ucells_KS}
Let $K^{\langle\infty\rangle}$ be an unbounded nested fractal. There exists a bounded periodic function $\Psi: (0,\infty)\to [c,d]$ with period $L^{-1}$ and $0 < c  \le d$ such that, for any finite union of $n$-cells $U\subset K^{\langle\infty\rangle}$,
\begin{equation}\label{E:liminf_Ucells_KS}
\liminf_{r\to 0^+} \frac{1}{r^{d_h}}\Psi(-\ln r)\widetilde{\mathcal{M}}_{\mathbf{1}_U}(r)=|\partial U|
\end{equation}
where $|\partial U|$ is the number of points in the boundary of $U$.
\end{thm}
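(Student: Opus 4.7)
The plan is to mirror the strategy of Section~\ref{S:HS_functional}, replacing the heat semigroup functional by the metric-measure analogue
\[
\mathcal{N}_{A,B}(r) := \int_A \int_{B \cap B(x,r)} d\mu(y)\, d\mu(x),
\]
defined for compact $A, B \subset K^{\langle\infty\rangle}$, and noting that $\widetilde{\mathcal{M}}_{\mathbf{1}_U}(r) = 2\, r^{-d_h}\, \mathcal{N}_{U,\, U^c}(r)$. Writing $U = \bigcup_{i\in I} K_{w_i}$ and decomposing $U^c$ in terms of $n$-complexes reduces the problem to the asymptotics of $\mathcal{N}_{K_w,K_v}(r)$ for pairs of $n$-complexes meeting at most at a single point.

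Two structural properties for $\mathcal{N}_{A,B}$ need to be established. \textbf{Scaling:} for any $L^{-n}$-similitude $\psi_w$ ($w \in W_n$), the change of variables $x = \psi_w(x')$, $y = \psi_w(y')$ together with $d(\psi_w(x'),\psi_w(y')) = L^{-n} d(x',y')$ and $\mu(\psi_w(E)) = M^{-n}\mu(E)$ yields the exact identity
\[
\mathcal{N}_{\psi_w(A),\, \psi_w(B)}(r) = M^{-2n}\, \mathcal{N}_{A,B}(L^{n} r).
\]
\textbf{Localization:} whenever $\tilde A \subset A$, $\tilde B \subset B$ are such that $d(A\setminus \tilde A, B) > 0$ and $d(A, B \setminus \tilde B) > 0$, no pair $(x,y) \in (A\times B)\setminus (\tilde A \times \tilde B)$ contributes for $r$ smaller than those distances, giving the \emph{exact} equality $\mathcal{N}_{A,B}(r) = \mathcal{N}_{\tilde A, \tilde B}(r)$; in particular $\mathcal{N}_{A,B}(r) = 0$ when $d(A,B)>0$ and $r < d(A,B)$. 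These properties are much sharper than in Section~\ref{S:HS_functional} (no exponential error), because the metric ball $B(x,r)$ has a hard cutoff.

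Applied at a shared boundary point $q \in K_w \cap K_v$: the nested-fractal axioms guarantee unique subcells $K_{wi} \subset K_w$ and $K_{vj} \subset K_v$ containing $q$, and, up to a symmetry fixing $q$ (provided by Axiom 3 of Definition~\ref{def:nested}), a single $L^{-1}$-similitude $\tilde\psi_q$ with $\tilde\psi_q(K_w)=K_{wi}$ and $\tilde\psi_q(K_v)=K_{vj}$. Since $K_w \cap K_v = \{q\}$ implies that $K_w \setminus K_{wi}$ is at positive distance from $K_v$ (and symmetrically), localization gives $\mathcal{N}_{K_w,K_v}(r) = \mathcal{N}_{K_{wi},K_{vj}}(r)$ for $r$ small enough, whence by scaling
\[
\mathcal{N}_{K_w,K_v}(r) \simeq M^{2}\, \mathcal{N}_{K_w,K_v}(L^{-1} r).
\]
The renewal Lemma~\ref{L:renewal_lemma} with $\alpha = M^2$, $\beta = L^{-1}$ (so $-\ln\alpha/\ln\beta = 2 d_h$) then produces a periodic function $\theta_{w,v}$ of period $L^{-1}$ satisfying $\mathcal{N}_{K_w,K_v}(r) = r^{2d_h}\, \theta_{w,v}(-\ln r) + o(r^{2d_h})$, and the argument of Lemma~\ref{L:periodic_indep} (reflecting/translating via the fractal symmetry and reapplying scaling plus localization) shows that $\theta_{w,v}(t) - \theta_{1,2}(t) \to 0$ as $t \to \infty$, so that a universal $\theta := \theta_{1,2}$ governs every boundary pair.

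Summing the contributions over the $|\partial U|$ boundary points gives
\[
\frac{1}{r^{d_h}}\, \widetilde{\mathcal{M}}_{\mathbf{1}_U}(r) = 2\, |\partial U|\, \theta(-\ln r) + o(1) \quad \text{as } r \to 0^+,
\]
and choosing $\Psi := 1/(2\theta)$ yields the stated limit. The fact that $\Psi$ takes values in a compact interval $[c,d] \subset (0,\infty)$ follows from Theorem~\ref{main:BV3}: since $\mathbf{1}_U \in BV(K^{\langle\infty\rangle})$, both $\liminf_{r\to 0^+} r^{-d_h}\widetilde{\mathcal{M}}_{\mathbf{1}_U}(r)$ and $\limsup_{r\to 0^+}r^{-d_h}\widetilde{\mathcal{M}}_{\mathbf{1}_U}(r)$ are strictly positive and finite, so $\theta$ is bounded above and bounded below away from zero. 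The main obstacle I expect is the self-similar identification used to derive $\mathcal{N}_{K_w,K_v}(r) \simeq M^{2}\, \mathcal{N}_{K_w,K_v}(L^{-1} r)$: unlike the heat-kernel argument — which is insensitive to the orientation of the contraction because it depends only on metric balls in time — the KS functional genuinely requires that a single similitude contract $K_w$ onto $K_{wi}$ and $K_v$ onto $K_{vj}$ simultaneously, and this alignment must be extracted from the nesting and symmetry axioms of Definition~\ref{def:nested} together with the single-point intersection property~\eqref{E:cell_point_intersection}.
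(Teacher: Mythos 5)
Your proposal is correct and follows essentially the same route as the paper: an exact scaling identity and an exact localization lemma for the metric--measure functional, the renewal lemma at a shared boundary vertex, symmetry to make the periodic function independent of the pair of complexes, and Theorem~\ref{main:BV3} to bound $\theta$ away from $0$ and $\infty$. The only (cosmetic) difference is that you work with the unnormalized $\mathcal{N}_{A,B}$ and apply the renewal lemma with $\alpha=M^2$ to get the exponent $2d_h$, whereas the paper keeps the $r^{-d_h}$ normalization and uses $\alpha=M$; the alignment issue you flag at the end (a single $L^{-1}$-similitude fixing $q$ and contracting both complexes onto their subcells) is resolved in the paper exactly as you suggest, via the symmetry axiom.
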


\begin{remark}
We stress that the function $\Psi$ is independent of the set $U$, see Lemma~\ref{L:theta_cells_KS}.
\end{remark}
To show Theorem~\ref{T:liminf_Ucells_KS} we will make use of the auxiliary functional
\begin{equation}\label{E:def_KS_AB}
\widetilde{\mathcal{M}}_{A,B}(r):=
\frac{1}{r^{d_H}}\int_A\int_{B\cap B(x,r)}d\mu(y)\,d\mu(x)
=\frac{1}{r^{d_H}}\int_A\mu^{\langle\infty\rangle}(B\cap B(x,r))\,d\mu^{\langle\infty\rangle}(x),
\end{equation}
where $A,B\subset K^{\langle\infty\rangle}$ and $r>0$.
 
\subsection{Scaling and localization}
We start by obtaining the analogue of the scaling Lemma~\ref{L:scaling lemma}; note that the walk dimension $d_w$ is not visible yet.
\begin{lem}\label{L:scaling_KS}
For any $w\in W_n$, $n\geq 1$, and compact sets $A,B \subset K^{\langle\infty\rangle}$
\begin{equation}\label{E:scaling_M2cells_KS}
\widetilde{\mathcal{M}}_{A,B}(r) = M^n\widetilde{\mathcal{M}}_{\psi_w(A),\psi_w(B)}(L^{-n}r).
\end{equation}
The same holds when $\psi_w$ is replaced by an invariant rotation of itself .
\end{lem}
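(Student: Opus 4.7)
The plan is to prove the identity by direct change of variables in the double integral defining $\widetilde{\mathcal{M}}_{\psi_w(A),\psi_w(B)}(L^{-n}r)$. Three elementary facts drive the calculation: (i) as an $L^{-n}$-similitude, $\psi_w$ rescales all distances by the factor $L^{-n}$, i.e.\ $d(\psi_w(x),\psi_w(y))=L^{-n}d(x,y)$; (ii) the normalized Hausdorff measure satisfies $\mu(\psi_w(E))=M^{-n}\mu(E)$ for Borel $E\subseteq K^{\langle\infty\rangle}$, equivalently $d\mu(\psi_w(x))=M^{-n}d\mu(x)$ under the push-forward; (iii) the relation $L^{d_h}=M$ coming from $d_h=\log M/\log L$.

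First I would substitute $x'=\psi_w(x)$ and $y'=\psi_w(y)$. Fact (i) gives $y'\in B(\psi_w(x),L^{-n}r)$ if and only if $L^{-n}d(y,x)\le L^{-n}r$, i.e.\ $y\in B(x,r)$; together with the injectivity of $\psi_w$ this yields the key set identity
\[
\psi_w(B)\cap B(\psi_w(x),L^{-n}r)=\psi_w\bigl(B\cap B(x,r)\bigr).
\]
Plugging this into the definition of $\widetilde{\mathcal{M}}_{\psi_w(A),\psi_w(B)}(L^{-n}r)$ and applying fact (ii) twice produces a combined Jacobian $M^{-2n}$, while the normalizing factor $(L^{-n}r)^{-d_h}$ becomes $L^{nd_h}r^{-d_h}$. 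Using fact (iii) to rewrite $L^{nd_h}=M^n$, the three powers collapse to $M^{-n}$, giving
\[
\widetilde{\mathcal{M}}_{\psi_w(A),\psi_w(B)}(L^{-n}r)=M^{-n}\,\widetilde{\mathcal{M}}_{A,B}(r),
\]
which is exactly \eqref{E:scaling_M2cells_KS} after rearrangement.

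For the statement's second claim, the only additional remark needed is that any rotation belonging to the symmetry group of $K$ is a Euclidean isometry that preserves both the metric $d$ and the normalized Hausdorff measure $\mu$. Consequently, replacing $\psi_w$ by its composition with such a rotation leaves facts (i) and (ii) unchanged, and the calculation above applies verbatim.

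Unlike its heat-kernel counterpart Lemma~\ref{L:scaling lemma}, this identity holds exactly, not merely up to the error tolerance $\simeq$, because there is no diffusion process whose Dirichlet domain must be localized. No step is genuinely an obstacle; the only point requiring a moment's care is the set identity for the intersected ball, which hinges on the injectivity of $\psi_w$ on $\mathbb{R}^d$ and on the fact that distances in the ambient metric and in $K^{\langle\infty\rangle}$ coincide (both are inherited from the Euclidean metric).
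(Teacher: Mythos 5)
Your proof is correct and follows essentially the same route as the paper's: a change of variables via $\psi_w$, the set identity $\psi_w(B)\cap B(\psi_w(x),L^{-n}r)=\psi_w(B\cap B(x,r))$, the measure scaling $\mu(\psi_w(E))=M^{-n}\mu(E)$ applied twice, and the relation $M=L^{d_h}$ to collapse the powers. The only cosmetic difference is that you evaluate the right-hand side at radius $L^{-n}r$ while the paper evaluates at $r$ and obtains the equivalent identity $\widetilde{\mathcal{M}}_{\psi_w(A),\psi_w(B)}(r)=M^{-n}\widetilde{\mathcal{M}}_{A,B}(rL^n)$; you also add a brief (correct) justification of the rotation-invariance claim, which the paper omits.
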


\begin{proof}
With the change of variables $x=\psi_w(z)$ and since $M=L^{d_H}$ we have
\begin{align*}
\mathcal{M}_{\psi_w(A),\psi_w(B)}(r)&=\frac{1}{r^{d_H}}\int_{\psi(A)}\int_{\psi_w(B)\cap B(x,r)}d\mu^{\langle\infty\rangle}(y)\,d\mu^{\langle\infty\rangle}(x)\\
&=\frac{1}{r^{d_H}}\int_A\int_{\psi_w(B)\cap B(\psi_w(z),r)}M^{-n}d\mu^{\langle\infty\rangle}(y)\,d\mu^{\langle\infty\rangle}(z)\\
&=\frac{1}{r^{d_H}}M^{-n}\int_A\int_{\psi_w(B\cap B(z,L^nr))}d\mu^{\langle\infty\rangle}(y)\,d\mu^{\langle\infty\rangle}(z)\\
&=\frac{1}{r^{d_H}}M^{-2n}\int_A\int_{B\cap B(z,L^nr)}d\mu^{\langle\infty\rangle}(y)\,d\mu^{\langle\infty\rangle}(z)\\
&=M^{-n}\Big(\frac{L^{-n}}{r}\Big)^{d_H}\int_A\int_{B\cap B(z,L^nr)}d\mu^{\langle\infty\rangle}(y)\,d\mu^{\langle\infty\rangle}(z)\\
&=M^{-n}\widetilde{\mathcal{M}}_{A,B}(rL^n).
\end{align*}
\end{proof}

We now move on to proving the analogue of the localization Lemma~\ref{L:localization}.

\begin{lem}[Localization lemma]\label{L:KS_localization}
Let $A,B\subset K^{\langle\infty\rangle}$ be compact and $\tilde A \subset A, \tilde B \subset B$ such that $d(\tilde A,B \setminus \tilde B) >r_0$, $d(A \setminus \tilde A, B \setminus\tilde B) >r_0$, $d(A\setminus \tilde A,\tilde B) >r_0$ for some $r_0>0$. Then,
\[
\widetilde{\mathcal{M}}_{A,B}(r) = \widetilde{\mathcal{M}}_{\tilde A,\tilde B}(r)
\]
holds for any $0<r\leq r_0$.
In particular, if $A,B$ satisfy $d(A,B)>r_0>0$, then 
\begin{equation}\label{E:KS_zero_loc}
\widetilde{\mathcal{M}}_{A,B}(r)=0
\end{equation}
for any $0< r\leq r_0$.
\end{lem}

\begin{proof}
Let $0<r\leq r_0$. Splitting the double integral,
\begin{align*}
\widetilde{\mathcal{M}}_{A,B}(r) &=\frac{1}{r^{d_H}}\int_{\tilde{A}}\int_{\tilde{B}\cap B(x,r)}d\mu^{\langle\infty\rangle}(y)\,d\mu^{\langle\infty\rangle}(x)+\frac{1}{r^{d_H}}\int_{\tilde{A}}\int_{(B{\setminus}\tilde{B})\cap B(x,r)}d\mu^{\langle\infty\rangle}(y)\,d\mu^{\langle\infty\rangle}(x)\\
&+\frac{1}{r^{d_H}}\int_{A{\setminus}\tilde{A}}\int_{\tilde{B}\cap B(x,r)}d\mu^{\langle\infty\rangle}(y)\,d\mu^{\langle\infty\rangle}(x)+\frac{1}{r^{d_H}}\int_{A{\setminus}\tilde{A}}\int_{(B{\setminus}\tilde{B})\cap B(x,r)}d\mu^{\langle\infty\rangle}(y)\,d\mu^{\langle\infty\rangle}(x)
\end{align*}
and the last three terms vanish because $d(\tilde A , B \setminus  \tilde B) >r_0$ implies $B(x,r)\cap (B\setminus\tilde{B})=\emptyset$ for any $x\in \tilde{A}$ whereas $d(A \setminus \tilde A,\tilde B) >r_0$ implies $B(x,r)\cap \tilde{B}=\emptyset$ for any $x\in A\setminus\tilde{A}$.
\end{proof}

\subsection{Finitely ramified nested fractals}
This section again concentrates on the case of unbounded nested fractals like the unbounded Sierpinski gasket, where any two $n$-cells $K_w,K_v$ intersect at most at one point, i.e.
\begin{equation}\label{E:finitely_ramified}
|K_w\cap K_v|\in\{0,1\}.
\end{equation}
It is now that the walk dimension appears when we take into account that the critical exponent is $\alpha_1=\frac{d_h}{d_w}$, c.f.~\cite[Theorem 5.1]{BV3}.

\begin{lem}\label{L:cell_pair_asymp_KS}
For any $n\in\mathbb{N}$ and $v,w\in W_n$ there exists a periodic function $\theta_{v,w}\colon(0,\infty)\to\mathbb{R}$ such that
\[
\widetilde{\mathcal{M}}_{K_v,K_w}(r)=r^{\alpha_1d_w}\theta_{v,w}(-\ln r)+o(r^{d_H})
\]
as $r\to 0^+$.
\end{lem}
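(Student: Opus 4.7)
The plan is to mirror the strategy of Lemma~\ref{L:asymp_heat_cells} from the heat semigroup setting, replacing the exponentially small relation $\simeq$ by exact equalities valid for all sufficiently small $r$. The building blocks are already in place: the scaling Lemma~\ref{L:scaling_KS}, the localization Lemma~\ref{L:KS_localization} (together with Corollary~\ref{C:KS_zero_loc}), and the renewal Lemma~\ref{L:renewal_lemma}.

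I would first split into two cases according to $K_v \cap K_w$. If $K_v \cap K_w = \emptyset$, compactness gives $r_0 := d(K_v, K_w) > 0$ and Corollary~\ref{C:KS_zero_loc} directly yields $\widetilde{\mathcal{M}}_{K_v, K_w}(r) = 0$ for every $r \le r_0$; the conclusion then holds with $\theta_{v,w} \equiv 0$. Otherwise, the single-point intersection assumption imposed on the nested fractal forces $K_v \cap K_w = \{q\}$ for some essential fixed point $q \in V^{\langle 0\rangle}$.

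In this second case I would pick a similitude $\phi$ with fixed point $q$ and contraction ratio $L^{-1}$ (composed if necessary with a reflection or rotation lying in the symmetry group of the nested fractal) so that $\phi(K_v)$ and $\phi(K_w)$ are precisely the unique $(n+1)$-subcomplexes of $K_v$ and $K_w$ that contain $q$. The single-point intersection property guarantees that each of the three distances $d(\phi(K_v), K_w \setminus \phi(K_w))$, $d(K_v \setminus \phi(K_v), \phi(K_w))$, and $d(K_v \setminus \phi(K_v), K_w \setminus \phi(K_w))$ is strictly positive; let $r_0 > 0$ be their minimum. Applying the localization Lemma~\ref{L:KS_localization} with $\tilde A = \phi(K_v)$, $\tilde B = \phi(K_w)$ and then the scaling Lemma~\ref{L:scaling_KS} (with $\phi$ in place of $\psi_w$ for $n=1$) chains to give
\[
\widetilde{\mathcal{M}}_{K_v, K_w}(r) = \widetilde{\mathcal{M}}_{\phi(K_v), \phi(K_w)}(r) = M^{-1}\,\widetilde{\mathcal{M}}_{K_v, K_w}(Lr)
\]
for every $r \le r_0$; equivalently $\widetilde{\mathcal{M}}_{K_v, K_w}(Lr) = M\,\widetilde{\mathcal{M}}_{K_v, K_w}(r)$.

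To close the argument I would invoke the renewal Lemma~\ref{L:renewal_lemma} with $\alpha = M \ge 1$ and $\beta = L^{-1} < 1$; its hypothesis $f(t) \simeq \alpha f(\beta t)$ is trivially met because the derived self-similar relation is actually an exact equality for all small $t$. This produces a periodic function $\theta_{v,w}$ with
\[
\widetilde{\mathcal{M}}_{K_v, K_w}(r) = r^{-\ln M/\ln L^{-1}}\,\theta_{v,w}(-\ln r) + o\bigl(r^{-\ln M/\ln L^{-1}}\bigr) = r^{d_h}\,\theta_{v,w}(-\ln r) + o(r^{d_h}),
\]
which matches the claim since $\alpha_1 d_w = d_h$ by the definition of the critical exponent. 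The main technical point is the construction of $\phi$ as a similitude compatible with the fractal's symmetries that maps the pair $(K_v, K_w)$ onto an analogous pair at one finer scale; this is the analogue of the corresponding step in Lemma~\ref{L:asymp_heat_cells} and is the only place where the full structure of nested fractals, beyond the scaling and localization properties, is needed.
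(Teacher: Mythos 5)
Your proposal is correct and follows essentially the same route as the paper: the same case split on $K_v\cap K_w$, the same combination of the localization Lemma~\ref{L:KS_localization} and the scaling Lemma~\ref{L:scaling_KS} to derive the exact renewal relation $\widetilde{\mathcal{M}}_{K_v,K_w}(r)=M^{-1}\widetilde{\mathcal{M}}_{K_v,K_w}(Lr)$, and the same application of Lemma~\ref{L:renewal_lemma} with $\alpha=M$, $\beta=L^{-1}$ to obtain the exponent $\ln M/\ln L=d_h=\alpha_1 d_w$. Your extra care in constructing the similitude $\phi$ (composing with a symmetry so that $\phi(K_v),\phi(K_w)$ are the $(n+1)$-subcomplexes at $q$) and in verifying the separation hypotheses of the localization lemma only makes explicit what the paper leaves implicit.
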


\begin{proof}
Assume first $K_v\cap K_w=\{q\}$. Let $\psi$ denote the similitude having $q$ as fixed point. Choosing $0<r<L^{-1}$ we have that the sets $K_v,\psi(K_v)$ and $K_w,\psi(K_w)$ satisfy the conditions of Lemma~\ref{L:KS_localization} with $A=K_v$, $\tilde{A}=\psi(K_v)$, $B=K_w$ and $\tilde{B}=\psi(K_w)$. Together with the scaling Lemma~\ref{L:scaling_KS} this implies
\[
\widetilde{\mathcal{M}}_{K_v,K_w}(r)=\widetilde{\mathcal{M}}_{\psi(K_v),\psi(K_w)}(r)=M^{-1}\widetilde{\mathcal{M}}_{K_v,K_w}(rL).
\]
In the case that $K_v\cap K_w=\emptyset$, the cells are separated at least by a factor $L^{-1}$, whence~\eqref{E:KS_zero_loc} implies $\widetilde{\mathcal{M}}_{K_v,K_w}(r)=0$. Applying the renewal Lemma~\ref{L:renewal_lemma} yields
\begin{align*}
\widetilde{\mathcal{M}}_{K_v,K_w}(r)&=r^{-\frac{\log M}{\log L^{-1}}}\theta_{v,w}(-\ln r)+o(r^{-\frac{\log M}{\log L^{-1}}})\\
&=r^{\frac{\log M}{\log L}}\theta_{v,w}(-\ln r)+o(r^{\frac{\log M}{\log L}})\\
&=r^{d_h}\theta_{v,w}(-\ln r)+o(r^{d_h})=r^{\frac{d_h}{d_w}d_w}\theta_{v,w}(-\ln r)+o(r^{d_h}).
\end{align*}
\end{proof}
As in the case treated in Section~\ref{S:HS_functional}, a consequence of the translation and rotation invariance of the functional $\widetilde{\mathcal{M}}_{K_v,K_w}(r)$ is that the periodic function appearing in Lemma~\ref{L:cell_pair_asymp_KS} is independent of the pair of cells $K_v,K_w$.

\begin{lem}\label{L:theta_cells_KS}
There exists a periodic function $\theta\colon (0,\infty)\to\mathbb{R}$ with period $L^{-1}$ such that for any $n\in\mathbb{N}$ and any pair of intersecting cells $K_v,K_w\subset K^{\langle\infty\rangle}$ with $v,w\in W_n$,
\[
\liminf_{s\to \infty}[\theta_{v,w}(s)-\theta(s)]=0.
\]
\end{lem}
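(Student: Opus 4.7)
The plan is to establish the stronger statement $\theta_{v,w}\equiv\theta_{1,2}$ via three successive reductions, mirroring the proof of Lemma~\ref{L:periodic_indep} in the heat-semigroup setting. Both $\theta_{v,w}$ and $\theta_{1,2}$ come from the same renewal Lemma~\ref{L:renewal_lemma} and therefore share a common period, so any asymptotic equality $\theta_{v,w}(z)-\theta_{1,2}(z)\to 0$ as $z\to\infty$ automatically promotes to pointwise equality, which is strictly stronger than the claimed $\liminf=0$.

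The first reduction factors out the common prefix of $v$ and $w$. Writing $v=u\tilde v$ and $w=u\tilde w$ for the longest common $u\in W_k$, the scaling Lemma~\ref{L:scaling_KS} gives $\widetilde{\mathcal{M}}_{K_v,K_w}(r)=M^{-k}\widetilde{\mathcal{M}}_{K_{\tilde v},K_{\tilde w}}(L^k r)$. Plugging in the asymptotic of Lemma~\ref{L:cell_pair_asymp_KS} together with $M=L^{d_H}$ and invoking periodicity identifies $\theta_{v,w}\equiv\theta_{\tilde v,\tilde w}$, so I may assume $v_1\neq w_1$.

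The second reduction, from $(K_v,K_w)$ back up to the $1$-complex pair $(K_{v_1},K_{w_1})$, is the technical heart of the argument. Under $v_1\neq w_1$ one has $K_v\subset K_{v_1}$ and $K_w\subset K_{w_1}$ with $K_{v_1}\cap K_{w_1}=\{q\}$ by property~\eqref{E:cell_point_intersection}. Since $q\in V_v^{\langle 0\rangle}$ (Nesting axiom) and at each descent level exactly one sub-complex of $K_{v_1}$ has $q$ as a vertex, $K_v$ is forced to lie in the canonical nested sub-cell chain at $q$ inside $K_{v_1}$, and likewise for $K_w$ inside $K_{w_1}$. The crucial geometric observation is that the junction vertices of $K_w$ with its neighboring sub-cells inside $K_{w_1}$ are all distinct from $q$; combined with $K_{v_1}\cap K_{w_1}=\{q\}$, this keeps the three distances required by Lemma~\ref{L:KS_localization} strictly positive. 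Applying that lemma with $A=K_{v_1}$, $\tilde A=K_v$, $B=K_{w_1}$, $\tilde B=K_w$ yields $\widetilde{\mathcal{M}}_{K_v,K_w}(r)=\widetilde{\mathcal{M}}_{K_{v_1},K_{w_1}}(r)$ for every $r$ below some threshold. Dividing by $r^{d_H}$ and using periodicity then forces $\theta_{v,w}\equiv\theta_{v_1,w_1}$.

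The final step uses the Symmetry axiom: the group generated by the reflections through the hyperplanes $H_{xy}$, $x,y\in V^{(0)}$, acts transitively on ordered pairs of intersecting $1$-complexes in the canonical examples (Sierpi\'nski gasket, Vicsek set). Choosing a Euclidean isometry $\sigma$ taking $(K_{v_1},K_{w_1})$ to $(K_1,K_2)$ and using that $\sigma$ preserves both distance and Hausdorff measure gives $\widetilde{\mathcal{M}}_{K_{v_1},K_{w_1}}(r)=\widetilde{\mathcal{M}}_{K_1,K_2}(r)$ for all $r$, hence $\theta_{v_1,w_1}\equiv\theta_{1,2}$. Chaining the three reductions concludes. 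The main obstacle I anticipate is the positive-distance verification in Step 2: property~\eqref{E:cell_point_intersection} has to be used delicately to rule out accumulation of $K_{w_1}\setminus K_w$ onto $K_v$ near $q$. The transitivity invoked in Step 3 is immediate for the main examples of interest but would require a separate argument via Connectivity and Symmetry in full generality.
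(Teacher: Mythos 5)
Your argument is correct and arrives at the same key intermediate fact as the paper, namely the exact identity $\widetilde{\mathcal{M}}_{K_v,K_w}(r)=\widetilde{\mathcal{M}}_{K_1,K_2}(r)$ for all sufficiently small $r$, from which the statement (indeed the stronger pointwise equality $\theta_{v,w}\equiv\theta_{1,2}$, since a periodic function tending to zero vanishes identically) follows via Lemma~\ref{L:cell_pair_asymp_KS}. The decomposition you use to get there is genuinely different. The paper argues in one stroke: for small $r$ only the ball around the junction point contributes, and the germ of $K_v\cup K_w$ at that point is carried onto the germ of $K_1\cup K_2$ at its junction by translation and rotation invariance. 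You instead strip the common prefix with the scaling Lemma~\ref{L:scaling_KS} (the prefactors cancel because $M=L^{d_h}$ and $\theta_{\tilde v,\tilde w}$ is $\ln L$-periodic in the log variable), pass upward from $(K_v,K_w)$ to the ambient $1$-cells via Lemma~\ref{L:KS_localization}, and finish with a single global isometry carrying $(K_{v_1},K_{w_1})$ to $(K_1,K_2)$. Your route has the advantage of never comparing configurations across scales, at the price of the positive-distance verifications in your second step, which do go through because the junction point lies in exactly one $n$-subcell of each ambient $1$-cell, so it is not in the closure of $K_{v_1}\setminus K_v$ or of $K_{w_1}\setminus K_w$. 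Both proofs ultimately rest on the same symmetry input, that the isometry group of the fractal identifies any two ordered pairs of adjacent cells; the paper asserts this as ``translation and rotation invariance'' while you verify it only for the Sierpinski gasket and the Vicsek set, so this is not a gap relative to the paper, but it is the one place where a fully general nested fractal would require an additional argument.
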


\begin{proof}
Let $K_1,K_2\subset K^{\langle\infty\rangle}$ be two $1$-cells that intersect at one point. Note that $K_2\cap B(x,r)$ is empty for any $x\in K_1$ with $0<r<L^{-n}$ and thus translation and rotation invariance imply
\begin{equation*}\label{E:theta_cells_KS_01}
\widetilde{\mathcal{M}}_{K_v,K_w}(r)=\frac{1}{r^{d_h}}\int_{K_v}\int_{K_w\cap B(x,r)}d\mu(y)\,d\mu(x)
=\frac{1}{r^{d_h}}\int_{K_1}\int_{K_2\cap B(x,r)}d\mu(y)\,d\mu(x)=\widetilde{\mathcal{M}}_{K_1,K_2}(r)
\end{equation*}
for $r>0$ sufficiently small. By virtue of Lemma~\ref{L:cell_pair_asymp_KS} there exist periodic functions $\theta_{v,w}(s)$ and $\theta_{1,2}(s)$ such that
\begin{align*}
\liminf_{s\to\infty}\big(\theta_{v,w}(s)-\theta_{1,2}(s)\big)
&=\liminf_{r\to 0^+}\theta_{v,w}(-\ln r)-\theta_{1,2}(-\ln r)\big)\\
&=\liminf_{r\to 0^+}\big(r^{-d_h}\widetilde{\mathcal{M}}_{K_v,K_w}(r)-r^{-d_h}\widetilde{\mathcal{M}}_{K_1,K_2}(r)\big)=0,
\end{align*}
which gives the assertion with $\theta:=\theta_{1,2}$.
\end{proof}

\begin{proof}[Proof of Theorem~\ref{T:liminf_Ucells_KS}]
Write $U=\bigcup_{i=1}^N K_{w_i}$, $N>0$, where $K_{w_i}\subset K^{\langle\infty\rangle}$ are $n$-cells. 
As in Theorem~\ref{T:liminf_Ucells_heat} define for each $i=1,\ldots, N$ the index set
\[
J_{w_i}:=\{j=1,\ldots,N\colon K_{w_j}\subset\overline{K^{\langle\infty\rangle}\setminus U},~ d(K_{w_i},K_{w_j})=0\}
\]
and note that $d(K_{w_i},K_{w_j})=0$ is equivalent to $d(K_{w_i},K_{w_j})<r$ for any $0<r<{\rm diam}\, K_{w_i}/2$. Thus, for each such $r$ we obtain
\begin{align*}
\mathcal{M}_{\mathbf{1}_U}(r)&=\frac{1}{r^{d_h}}\int_{K^{\langle\infty\rangle}}\int_{B(x,r)\cap K^{\langle\infty\rangle}}|\mathbf{1}_U(x)-\mathbf{1}_U(y)|\,d\mu(y)\,d\mu(x)\\
&=\frac{1}{r^{d_h}}\int_U\int_{B(x,r)\cap U^c}d\mu(y)\,d\mu(x)+\frac{1}{r^{d_h}}\int_{U^c}\int_{B(x,r)\cap U}d\mu(y)\,d\mu(x)\\
&=\sum_{i=1}^N\frac{1}{r^{d_h}}\int_{K_{wi}}\int_{B(x,r)\cap U^c}d\mu(y)\,d\mu(x)+\sum_{i=1}^N\frac{1}{r^{d_h}}\int_{U^c}\int_{B(x,r)\cap K_{wi}}d\mu(y)\,d\mu(x)\\
&=\sum_{i=1}^N\sum_{j\in J_{w_i}}\frac{1}{r^{d_h}}\int_{K_{wi}}\int_{B(x,r)\cap K_{w_j}}d\mu(y)\,d\mu(x)+\sum_{i=1}^N\sum_{j\in J_{w_i}}\frac{1}{r^{d_h}}\int_{K_{wj}}\int_{B(x,r)\cap K_{w_i}}d\mu(y)\,d\mu(x)\\
&=\sum_{i=1}^N\sum_{j\in J_{w_i}}\Big(\mathcal{M}_{K_{w_i},K_{w_j}}(r)+\mathcal{M}_{K_{w_j},K_{w_i}}(r)\Big).
\end{align*}
By symmetry, Lemma~\ref{L:cell_pair_asymp_KS} and Lemma~\ref{L:theta_cells_KS} implies that, as $r\to 0^+$, the latter sum equals
\[
2\sum_{i=1}^N|J_{w_i}|(r^{-\alpha_1 d_w}\theta_{1,2}(-\ln r)+o(r^{d_h})=2|\partial U|\big(r^{-\alpha_1 d_w}\theta_{1,2}(-\ln r)+o(r^{d_h})\big),
\]
hence~\eqref{E:liminf_Ucells_KS} holds with $\Psi(z)=(2\theta_{1,2}(z))^{-1}$.

\medskip

To justify that the $\liminf$ is non-zero we invoke the definition of variation introduced in~\cite[Section 4.2]{BV3} that is given by
\[
{\rm Var}(f)=\liminf_{r\to 0^+}\frac{1}{r^{\alpha_1d_w}}\widetilde{\mathcal{M}}_f(r),
\]
where $\alpha_1>0$ is the critical exponent. From~\cite[Theorem 5.1]{BV3} it follows that $\alpha_1d_w=d_h$ for p.c.f. fractals, which together with~\cite[Theorem 4.9]{BV3} yields
\[
\liminf_{r\to 0^+} \frac{1}{r^{d_h}}\widetilde{\mathcal{M}}_{\mathbf{1}_U}(r)={\rm Var}(\mathbf{1}_U)\geq C\|\mathbf{1}_{U}\|_{1,d_h/d_w}.
\]
Since $\mathbf{1}_{U}$ is a non-constant function, the seminorm above is non-zero and hence~\eqref{E:liminf_Ucells_KS} is also non-zero as long as $\Psi$ is non-zero.
\end{proof}

\subsection{Non-existence of the limit}\label{SS:KS_non-limit}
The geometric nature of the functional $\widetilde{\mathcal{M}}_{A,B}$ defined in~\eqref{E:def_KS_AB} makes it possible to prove the non-trivial oscillations of the function $\Psi$ in~\eqref{E:liminf_Ucells_KS}. In this section we continue working with p.c.f. nested fractals and provide explicit details for the case of the Sierpinski gasket and the Vicsek set.

\medskip

First, note that the proof of Theorem~\ref{T:liminf_Ucells_KS} indicates that it suffices to study the (non)convergence of $\widetilde{\mathcal{M}}_{K_u,K_v}(r)$ for any pair of $n$-cells that meet at a point $p\in V^{(n)}$. Because the latter functional can be written as
\begin{equation}\label{E:KS_as_measure}
\widetilde{\mathcal{M}}_{K_u,K_v}(r)=\frac{1}{r^{d_H}}\mu\otimes\mu\big(\{(x,y)\in K_u\times K_v\colon d(x,y)\leq r\}\big),
\end{equation}
the main idea consists in approximating that quantity for different sequences $\{r_m\}_{m\geq 1}$ $r_m\to 0^+$. The choice of the sequences in the following lemma is based in the observation illustrated in Figure~\ref{F:SG_VS_sandwich}: 

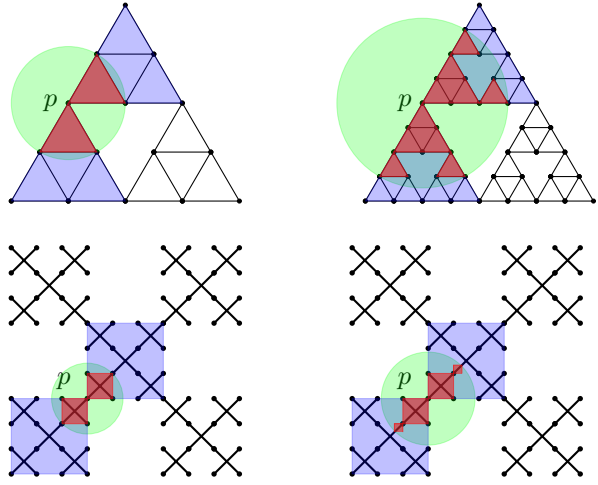
\begin{figure}[H]
\centering
\begin{tabular}{cc}
\begin{tikzpicture}[scale=1.5]
\tikzstyle{every node}=[draw,circle,fill=black,minimum size=1.5pt, inner sep=0pt]
\draw ($(0:0)$) node () {} --++ ($(0:2/4)$) node () {} --++ ($(120:2/4)$) node () {} --++ ($(240:2/4)$) node () {};
\draw ($(0:2/4)$) node () {} --++ ($(0:2/4)$) node () {} --++ ($(120:2/4)$) node () {} --++ ($(240:2/4)$) node () {};
\draw ($(60:2/4)$) node () {} --++ ($(0:2/4)$) node () {} --++ ($(120:2/4)$) node () {} --++ ($(240:2/4)$) node () {};
\foreach \a in {0,60}{
\draw ($(\a:2/2)$) node () {} --++ ($(0:2/4)$) node () {} --++ ($(120:2/4)$) node () {} --++ ($(240:2/4)$) node () {};
\foreach \b in{0,60}{
\draw ($(\a:2/2)+(\b:2/4)$)node () {} --++ ($(0:2/4)$) node () {} --++ ($(120:2/4)$) node () {} --++ ($(240:2/4)$) node () {};
}
}
\coordinate[label=left:{$p\;$}] (p) at ($(0:0)+(60:1)$);
\filldraw[green,nearly transparent] (p) circle (.5 cm);
\filldraw[blue,nearly transparent] ($(0:0)$) --++ ($(60:2)$) --++ ($(300:1)$) --++ ($(180:1)$) --++ ($(300:1)$) --++ ($(180:1)$);
\filldraw[red,semitransparent] ($(0:0)+(60:1/2)$) --++ ($(60:1)$) --++ ($(300:1/2)$) --++ ($(180:1/2)$) --++ ($(300:1/2)$) --++ ($(180:1/2)$);
\end{tikzpicture}
\hspace*{2em}&
\begin{tikzpicture}[scale=1.5]
\tikzstyle{every node}=[draw,circle,fill=black,minimum size=1.5pt, inner sep=0pt]
\draw ($(0:0)$) node () {} --++ ($(0:2/8)$) node () {} --++ ($(120:2/8)$) node () {} --++ ($(240:2/8)$) node () {};
\draw ($(0:2/8)$) node () {} --++ ($(0:2/8)$) node () {} --++ ($(120:2/8)$) node () {} --++ ($(240:2/8)$) node () {};
\draw ($(60:2/8)$) node () {} --++ ($(0:2/8)$) node () {} --++ ($(120:2/8)$) node () {} --++ ($(240:2/8)$) node () {};
\foreach \a in {0,60}{
\draw ($(\a:2/4)$) node () {} --++ ($(0:2/8)$) node () {} --++ ($(120:2/8)$) node () {} --++ ($(240:2/8)$) node () {};
\foreach \b in{0,60}{
\draw ($(\a:2/4)+(\b:2/8)$)node () {} --++ ($(0:2/8)$) node () {} --++ ($(120:2/8)$) node () {} --++ ($(240:2/8)$) node () {};
}
}
\foreach \c in{0,60}{
\draw ($(\c:2/2)$) node () {} --++ ($(0:2/8)$) node () {} --++ ($(120:2/8)$) node () {} --++ ($(240:2/8)$) node () {};
\draw ($(\c:2/2)+(0:2/8)$) node () {} --++ ($(0:2/8)$) node () {} --++ ($(120:2/8)$) node () {} --++ ($(240:2/8)$) node () {};
\draw ($(\c:2/2)+(60:2/8)$) node () {} --++ ($(0:2/8)$) node () {} --++ ($(120:2/8)$) node () {} --++ ($(240:2/8)$) node () {};
\foreach \a in {0,60}{
\draw ($(\c:2/2)+(\a:2/4)$) node () {} --++ ($(0:2/8)$) node () {} --++ ($(120:2/8)$) node () {} --++ ($(240:2/8)$) node () {};
\foreach \b in{0,60}{
\draw ($(\c:2/2)+(\a:2/4)+(\b:2/8)$)node () {} --++ ($(0:2/8)$) node () {} --++ ($(120:2/8)$) node () {} --++ ($(240:2/8)$) node () {};
}
}
}
\coordinate[label=left:{$p\;$}] (p) at ($(0:0)+(60:1)$);
\filldraw[green,nearly transparent] (p) circle (.75 cm);
\filldraw[blue,nearly transparent] ($(0:0)$) --++ ($(60:2)$) --++ ($(300:1)$) --++ ($(180:1)$) --++ ($(300:1)$) --++ ($(180:1)$);
\filldraw[red,semitransparent] ($(0:0)+(60:1/4)$) --++ ($(60:6/4)$) --++ ($(300:1/4)$) --++ ($(180:1/4)$) --++ ($(300:1/2)$) --++ ($(60:1/4)$) --++ ($(300:1/4)$) --++ ($(180:3/4)$) --++ ($(300:3/4)$) --++ ($(180:1/4)$) --++ ($(60:1/4)$) --++ ($(180:1/2)$) --++ ($(300:1/4)$) --++ ($(180:1/4)$);
\end{tikzpicture}
\\
&\\
\begin{tikzpicture}[scale=1/3]
\foreach \c/\d in {0/0, 0/6, 3/3, 6/0, 6/6}{
\foreach \a/\b in {0/0, 0/2, 1/1, 2/0, 2/2}{
\fill[shift={(\c,\d)}] (\a,\b) circle (3pt);
\fill[shift={(\c,\d)}] (\a,\b+1) circle (3pt);
\fill[shift={(\c,\d)}] (\a+1,\b+1) circle (3pt);
\fill[shift={(\c,\d)}] (\a+1,\b) circle (3pt);
\draw[thick,shift={(\c,\d)}] (\a,\b) -- (\a+1,\b+1) (\a,\b+1) -- (\a+1,\b);
}
}
\coordinate[color=red, label=above left :{$p\;$}] (p) at (3,3);
\filldraw[green,nearly transparent] (p) circle (1.41 cm);
\filldraw[blue,nearly transparent] ($(0:0)$) --++ ($(90:3)$) --++ ($(0:3)$) --++ ($(90:3)$) --++ ($(0:3)$) --++ ($(270:3)$) --++ ($(180:3)$) --++ ($(270:3)$) --++ ($(180:3)$);
\filldraw[red,semitransparent] (2,2) --++ ($(90:1)$) --++ ($(0:1)$) --++ ($(90:1)$) --++ ($(0:1)$) --++ ($(270:1)$) --++ ($(180:1)$) --++ ($(270:1)$) --++ ($(180:1)$);
\end{tikzpicture}
\hspace*{2em}
&
\begin{tikzpicture}[scale=1/3]
\foreach \c/\d in {0/0, 0/6, 3/3, 6/0, 6/6}{
\foreach \a/\b in {0/0, 0/2, 1/1, 2/0, 2/2}{
\fill[shift={(\c,\d)}] (\a,\b) circle (3pt);
\fill[shift={(\c,\d)}] (\a,\b+1) circle (3pt);
\fill[shift={(\c,\d)}] (\a+1,\b+1) circle (3pt);
\fill[shift={(\c,\d)}] (\a+1,\b) circle (3pt);
\draw[thick,shift={(\c,\d)}] (\a,\b) -- (\a+1,\b+1) (\a,\b+1) -- (\a+1,\b);
}
}
\coordinate[label=above left:{$p\;$}] (p) at (3,3);
\filldraw[green,nearly transparent] (p) circle (1.85 cm);
\filldraw[blue,nearly transparent] ($(0:0)$) --++ ($(90:3)$) --++ ($(0:3)$) --++ ($(90:3)$) --++ ($(0:3)$) --++ ($(270:3)$) --++ ($(180:3)$) --++ ($(270:3)$) --++ ($(180:3)$);
\filldraw[red,semitransparent] (2,2) --++ ($(90:1)$) --++ ($(0:1)$) --++ ($(90:1)$) --++ ($(0:1)$) --++ ($(270:1)$) --++ ($(180:1)$) --++ ($(270:1)$) --++ ($(180:1)$);
\filldraw[red,semitransparent] (5/3,5/3) --++ ($(90:1/3)$) --++ ($(0:1/3)$) --++ ($(270:1/3)$);
\filldraw[red,semitransparent] (4,4) --++ ($(90:1/3)$) --++ ($(0:1/3)$) --++ ($(270:1/3)$);
\end{tikzpicture}
\end{tabular}
\caption{Intersections of $n$-cells with balls of radius $r_n$ (left) and $r'_n$ (right) in the Sierpinski gasket (above) and the Vicsek set (below).}
\label{F:SG_VS_sandwich}
\end{figure}
On the one hand, if $p\in V^{(n)}$ is the vertex of an $n$-cell $K_{w}$ and $r_n:=({\rm diam}\, K)L^{-n}$, 
\begin{equation}
    B(p,r)\cap K_w=K_w\qquad\forall r> r_n
\end{equation}
while 
\begin{equation}
    B(p,r)\cap K_w\varsubsetneq K_w\qquad\forall r< r_n.
\end{equation}
On the other hand, a ball $B(p,r')$ with radius $r'>r'_n:=({\rm diam}\, K)L^{-n}+({\rm diam}\, K)L^{-n-1}$ will also cover those $(n+1)$-cells in the $(n-1)$-cell that contained $K_w$ but did not belong to $K_w$ itself. That is
\begin{equation}
    B(p,r)\cap K_w=K_w\cup K_w^*\qquad\forall r> r'_n 
\end{equation}
where
\begin{equation}
  K_w^*:=\bigcup_{\substack{\tilde{w}\in W_{n+1}\\ K_{\tilde{w}}\cap K_w^c\neq\emptyset}}K_{\tilde{w}},
\end{equation}
and 
\begin{equation}
    B(p,r)\cap K_w\subset K_w\cup K_w^* \qquad\forall r< r'_n.
\end{equation}
Finally, notice that the number of $(n+1)$-cells in $K^*_w$ is independent of the cell and the level. Using the previous notation, the non-existence of the limit~\eqref{E:liminf_Ucells_KS} follows from the next lemma.
\begin{lem}\label{L:KS_cells_SG}
Let $(K_u,K_v)$ denote a pair of $n$-cells with $K_u\cap K_v=\{p\}$ and let
\begin{equation}
  R:=\#\{ i\in W_n\colon  i\neq 1, K_i\cap K_1^c\neq\emptyset\}.
\end{equation}
For any $m> n$ large enough, 
\begin{equation}\label{E:KS_sandwich_SG}
\mu\otimes\mu\big(\{(x,y)\in K_u\times K_v\colon d(x,y)\leq r_m\}\big)=
\begin{cases}
2M^{-m}&\text{for }r_m=({\rm diam}\, K)L^{-m},\\
2M^{-m}(1+RM^{-1})&\text{for }r_m=({\rm diam}\, K)L^{-m}(1+L^{-1}).
\end{cases}
\end{equation}
\end{lem}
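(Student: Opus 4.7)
The plan is to identify the set
\[
S_m := \{(x,y)\in K_u\times K_v\colon d(x,y)\le r_m\}
\]
explicitly for each of the two specified radii, and then read off its $\mu\otimes\mu$-measure directly using $\mu(K_w)=M^{-|w|}$. The crucial geometric ingredient has already been isolated in the discussion preceding the lemma and illustrated in Figure~\ref{F:SG_VS_sandwich}: the ball $B(p,r_m)$ meets each $n$-complex $K_w$ containing $p$ in exactly its $m$-subcomplex $C_w^{(m)}$ at $p$, while at radius $r'_m=r_m(1+L^{-1})$ this ball additionally engulfs (in full) each of the externally adjacent $(m{+}1)$-subcomplexes listed in $C_w^{(m)*}$, whose cardinality is the level-independent quantity $R$.

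First I would upgrade these one-point covering statements into two-point statements about $S_m$. Combining the triangle inequality $d(x,y)\le d(x,p)+d(p,y)$ with the matching lower bounds that come from the nested-fractal hypothesis $K_u\cap K_v=\{p\}$ together with the finite-ramification assumption $|K_w^{\langle 0\rangle}\cap K_v^{\langle 0\rangle}|\in\{0,1\}$ — so that a point of $K_u$ lying far from $p$ cannot be close to any point of $K_v$ — I would show that for $r_m=({\rm diam}\, K)L^{-m}$ the set $S_m$ reduces, up to $\mu\otimes\mu$-null sets, to the product $C_u^{(m)}\times C_v^{(m)}$; and that for $r'_m$ one must additionally include the cross-products $C_u^{(m)}\times C_v^{(m)*}$ and $C_u^{(m)*}\times C_v^{(m)}$.

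Once the containing region is pinned down, Lemma~\ref{L:scaling_KS} and Lemma~\ref{L:KS_localization} can be used to rescale the problem by the similitude fixing $p$ down to a universal base case involving two level-$0$ copies of $K$ meeting at a vertex, whose measure is a fixed constant provided by the translation- and rotation-invariance of Lemma~\ref{L:theta_cells_KS}. The arithmetic is then straightforward: the first case contributes the symmetric $(C_u^{(m)},C_v^{(m)})$ pair producing the value $2M^{-m}$, and the second case adds the $R$ externally adjacent $(m{+}1)$-cells on each side, each carrying the relative weight $M^{-1}$ against an $m$-cell, which multiplies the base by the factor $(1+RM^{-1})$.

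The main obstacle is the rigorous upgrade of the one-point to the two-point statement in the second case. One has to verify that for every $x$ in the opposite $m$-cell the entire $(m{+}1)$-cell of $C_w^{(m)*}$ lies within $B(x,r'_m)$, so that its full measure is counted and not a mere fraction, while simultaneously ruling out the intrusion of any further cell at a finer scale. Establishing that the gap $r'_m-r_m=({\rm diam}\, K)L^{-m-1}$ exactly matches the diameter of an $(m{+}1)$-cell, and that the next relevant threshold sits strictly above $r'_m$, is where the specific geometry of the Sierpinski gasket and the Vicsek set plays its essential role.
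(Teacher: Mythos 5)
Your overall strategy --- pin down the trace of the ball $B(p,r_m)$ on the two complexes at the two critical radii, then count cells --- is the same as the paper's, but the step you yourself single out as ``the main obstacle'' is not a deferrable technicality: it is where the proof lives, and as proposed it fails. The identification of $S_m$ with the product $C_u^{(m)}\times C_v^{(m)}$ (up to null sets) is false in both directions. The inclusion $C_u^{(m)}\times C_v^{(m)}\subseteq S_m$ fails because two points at the far ends of the two $m$-cells can be at distance up to $2r_m$ (in the Vicsek set, the far corners of the two $m$-cells along the diagonal through $p$ are at distance exactly $2r_m$); the triangle inequality $d(x,y)\le d(x,p)+d(y,p)$ only yields the bound $2r_m$, which is the wrong one. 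The reverse inclusion also fails in the Sierpinski gasket: the two complexes meet at $p$ at an angle of $60^\circ$, so a positive-measure set of points in the $m$-cell adjacent to $C_u^{(m)}$ at its non-$p$ vertex lies within distance strictly less than $r_m$ of $C_v^{(m)}$. Your assertion that ``a point of $K_u$ lying far from $p$ cannot be close to any point of $K_v$'' is precisely the quantitative statement that needs proof, and it is not supplied.

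Independently of this, the final arithmetic does not follow from your own set identification. If $S_m$ were $C_u^{(m)}\times C_v^{(m)}$ up to null sets, then
\[
\mu\otimes\mu(S_m)=\mu\big(C_u^{(m)}\big)\,\mu\big(C_v^{(m)}\big)=M^{-m}\cdot M^{-m}=M^{-2m},
\]
which is not $2M^{-m}$; the number $2M^{-m}$ is the $\mu$-measure of the \emph{union} $C_u^{(m)}\cup C_v^{(m)}=(K_u\cup K_v)\cap B(p,r_m)$. The paper's proof reaches $2M^{-m}$ precisely by first converting the two-variable quantity $\mu\otimes\mu(S_m)$ into the one-variable quantity $\mu\big((K_u\cup K_v)\cap B(p,r_m)\big)$ and then evaluating the latter; that conversion is the actual content of the lemma, and your proposal neither states nor justifies it --- nor could it, since the product set you describe has measure of order $M^{-2m}$, not $M^{-m}$. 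The same objection applies to the second radius: $\mu\otimes\mu\big(C_u^{(m)}\times(C_v^{(m)}\cup C_v^{(m)*})\big)=M^{-2m}(1+RM^{-1})$, off by a factor $M^{-m}$ from the claimed value. Until this mismatch of exponents between the set you identify and the value you assert is resolved, the proposal does not establish \eqref{E:KS_sandwich_SG}.
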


\begin{proof}
Let $p\in V_n\setminus V_{n-1}$ be the vertex where the pair of $n$-cells $(K_u,K_v)$ intersect. By construction, see also Figure~\ref{F:SG_VS_sandwich}, for $m>n$ sufficiently large we have
\begin{equation}
\big(K_{u}\cup K_{v}\big)\cap B(p,r_m)=
\begin{cases}
K_{wu}\cup K_{wv}\cup \{p\}&\text{for }r_m=({\rm diam}\, K)L^{-m},\\
K_{wu}^*\cup K_{wv}^*\cup \{p\}&\text{for }r_m=({\rm diam}\, K)L^{-m}(1+L^{-1}),
\end{cases}
\end{equation}
where $K_{wu}$ and $K_{wv}$, $w\in W_{m-n+1}$ denote the $(m+1)$-cells that intersect at $p$, and $K_{wu}^*$ and $K_{wv}^*$ their corresponding (outer) $L^{-(m+1)}$-neighborhoods.

On the one hand, the pairs of cells $(K_{wu},K_{wv})$ only intersect at one point, hence
\begin{align*}
&\mu\otimes\mu\big(\{(x,y)\in K_{wu}\times K_{wv}\colon d(x,y)\leq L^{-m}\} \big)\\
&= \mu\big((K_{wu}\cup K_{wv})\cap B(p,L^{-m})\big)
=\mu(K_{wu}\cup K_{wv})=2\mu(K_{wu})=2M^{-{m}}.
\end{align*}
On the other hand,
\begin{align*}
&\mu\otimes\mu\big(\{(x,y)\in K_{wu}\times K_{wv}\colon d(x,y)\leq L^{-m}(1+L^{-1})\} \big)\\
&= \mu\big((K_{wu}^*\cup K_{wv}^*)\cap B(p,L^{-m}(1+L^{-1})\big)
=\mu(K_{wu}^*\cup K_{wv}^*)=2\mu(K_{wu}^*)=2M^{-{m}}(1+RM^{-1}).
\end{align*}
\end{proof}
Since $M=L^{d_h}$, in view of~\eqref{E:KS_as_measure} and Lemma~\ref{L:KS_cells_SG} we can compute explicitly the two limits and confirm that they are different.

\begin{cor}\label{C:KS_liminf_SG}
For any pair of $n$-cells $(K_u,K_v)$ in a p.c.f. nested fractal, 
\begin{equation}\label{E:KS_liminf_SG}
\lim_{m\to\infty}\widetilde{\mathcal{M}}_{K_u,K_v}(r_m)=
\begin{cases}
2&\text{for }r_m=({\rm diam}\, K)^{-m},\\
2(1+2M^{-1})&\text{for }r_m=({\rm diam}\, K)L^{-m}(1+L^{-1}).
\end{cases}
\end{equation}
Consequently, the limit~\eqref{E:liminf_Ucells_KS} does not exist.
\end{cor}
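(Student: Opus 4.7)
The plan is a direct evaluation of the two limits using the representation \eqref{E:KS_as_measure} together with the explicit mass formulas provided by Lemma \ref{L:KS_cells_SG}. Concretely, I would substitute each of the two sequences $r_m$ into
\[
\widetilde{\mathcal{M}}_{K_u,K_v}(r_m)=\frac{1}{r_m^{d_h}}\,\mu\otimes\mu\bigl(\{(x,y)\in K_u\times K_v\colon d(x,y)\le r_m\}\bigr).
\]
Using $M=L^{d_h}$, one has $r_m^{d_h}=(\operatorname{diam}K)^{d_h}\,M^{-m}$ for the first sequence and $r_m^{d_h}=(\operatorname{diam}K)^{d_h}(1+L^{-1})^{d_h}M^{-m}$ for the second, so the prefactor $r_m^{-d_h}$ exactly cancels the $M^{-m}$ coming from Lemma \ref{L:KS_cells_SG}. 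With the normalization $\operatorname{diam}K=1$ adopted in this section, the two quotients reduce to the $m$-independent constants $2$ and $2(1+RM^{-1})$, which for the Sierpi\'nski gasket and the Vicsek set match the stated value $R=2$ and thus reproduce \eqref{E:KS_liminf_SG}.

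The key observation is that these two limiting values are distinct: their ratio $1+RM^{-1}\neq 1$, since $R\ge 1$ (any junction vertex $p\in V^{\langle n\rangle}\setminus V^{\langle n-1\rangle}$ necessarily lies in at least one neighboring $(n{+}1)$-complex outside $K_1$). Consequently $r\mapsto r^{-d_h}\widetilde{\mathcal{M}}_{K_u,K_v}(r)$ cannot converge as $r\to 0^+$. To transfer this oscillation from a single pair of complexes to the full functional $\widetilde{\mathcal{M}}_{\mathbf{1}_U}$, I would invoke the decomposition established at the end of the proof of Theorem \ref{T:liminf_Ucells_KS}:
\[
\widetilde{\mathcal{M}}_{\mathbf{1}_U}(r)=2\sum_{i=1}^N\sum_{j\in J_{w_i}}\widetilde{\mathcal{M}}_{K_{w_i},K_{w_j}}(r),
\]
a finite sum over the $|\partial U|$ boundary-intersecting pairs. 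By the translation and rotation invariance exploited in Lemma \ref{L:theta_cells_KS}, every summand agrees with $\widetilde{\mathcal{M}}_{K_1,K_2}(r)$ at small scales, so all terms oscillate in lockstep along $(r_m)$. If $\Psi$ in \eqref{E:liminf_Ucells_KS} were constant, then $r^{-d_h}\widetilde{\mathcal{M}}_{\mathbf{1}_U}(r)$ would have to converge, contradicting the two distinct limits exhibited along the two sequences.

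The substantive point of the proof is already encoded in Lemma \ref{L:KS_cells_SG}; the remaining work is essentially arithmetic. The only minor care needed is the bookkeeping between the intrinsic constant $R$ (cardinality of outer neighbors) and the displayed value $2$ appearing in \eqref{E:KS_liminf_SG}, which is specific to the p.c.f.\ examples treated (the Sierpi\'nski gasket and the Vicsek set). Once this is unpacked, the non-existence of $\lim_{r\to 0^+} r^{-d_h}\widetilde{\mathcal{M}}_{\mathbf{1}_U}(r)$, and hence the genuine oscillation of $\Psi$, follows immediately.
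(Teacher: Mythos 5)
Your proposal follows the paper's own argument essentially verbatim: substitute the two sequences into \eqref{E:KS_as_measure}, use $M=L^{d_h}$ to cancel the $M^{-m}$ from Lemma~\ref{L:KS_cells_SG} against $r_m^{-d_h}$, observe the two limits differ, and propagate the oscillation to $\widetilde{\mathcal{M}}_{\mathbf{1}_U}$ through the decomposition from the proof of Theorem~\ref{T:liminf_Ucells_KS}. The only point to watch is that for the second sequence $r_m^{d_h}$ carries the extra factor $(1+L^{-1})^{d_h}$, so the second limit is really $2(1+RM^{-1})(1+L^{-1})^{-d_h}$ rather than $2(1+RM^{-1})$ --- a slip you share with the paper which leaves the non-existence conclusion intact for the Sierpi\'nski gasket and the Vicsek set, since there $1+RM^{-1}\neq(1+L^{-1})^{d_h}$, but which means the ratio of the two limits is not simply $1+RM^{-1}$ and must be checked to differ from $1$ in general.
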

In particular we have $R=2$ in the Sierpinski gasket and $R=1$ in the Vicsek set.
\bibliographystyle{amsplain}
\bibliography{BVfractals_Refs}

\vspace*{2em}

\noindent
\textbf{Fabrice Baudoin}: \url{fabrice.baudoin@uconn.edu}\\
Department of Mathematics,
University of Connecticut,
USA

\medskip

\noindent
\textbf{Patricia Alonso Ruiz}: \url{paruiz@tamu.edu}\\
Department of Mathematics
Texas A\&M University
USA

\end{document}